\renewcommand{\texttt}[1]{%
  \begingroup
  \ttfamily
  \begingroup\lccode`~=`/\lowercase{\endgroup\def~}{/\discretionary{}{}{}}%
  \begingroup\lccode`~=`[\lowercase{\endgroup\def~}{[\discretionary{}{}{}}%
  \begingroup\lccode`~=`.\lowercase{\endgroup\def~}{.\discretionary{}{}{}}%
  \begingroup\lccode`~=`\_\lowercase{\endgroup\def~}{\_\discretionary{}{}{}}%
  \catcode`/=\active\catcode`[=\active\catcode`.=\active\catcode`\_=\active
  \scantokens{#1\noexpand}%
  \endgroup
}
\newcommand{\mono}[1]{\texttt{#1}}
\newcommand{\terminology}[1]{\emph{#1}}
\newcommand{\hrulethick} {\noalign{\hrule height 0.11em}}
\newcolumntype{A}{!{\vrule width 0.04em}}
\newcolumntype{B}{!{\vrule width 0.07em}}
\newcolumntype{C}{!{\vrule width 0.11em}}
\newenvironment{smallermatrix}[1]
{\arraycolsep=0.5pt\tiny
\array{#1}}
{\endarray}
\newtheorem{theorem}{Theorem}[section]
\newtheorem{lemma}[theorem]{Lemma}
\newtheorem{algorithm}[theorem]{Algorithm}
\theoremstyle{definition}
\newtheorem{example}[theorem]{Example}
\theoremstyle{remark}
\newtheorem{remark}[theorem]{Remark}
\numberwithin{equation}{section}
\newcommand{\diff}{\mathop{}\!\mathrm{d}}
\newcommand{\lb}{[}
\newcommand{\rb}{]}
\newcommand{\NN}{\mathbf{N}}
\newcommand{\ZZ}{\mathbf{Z}}
\newcommand{\QQ}{\mathbf{Q}}
\newcommand{\FF}{\mathbf{F}}
\newcommand{\dR}{\mathrm{dR}}
\DeclareMathOperator{\Jac}{Jac}
\newcommand{\lt}{<}
\newcommand{\gt}{>}
\newcommand{\amp}{&}
\begin{document}

\title{Explicit Coleman Integration in Larger Characteristic}


\author{Alex J. Best}
\address{111 Cummington Mall, Boston MA 02215}
\email{alex.j.best@gmail.com}
\thanks{%
I would like to thank Jennifer Balakrishnan, for suggesting this as something that might be possible, and for many subsequent helpful conversations and comments. Additional thanks are due to Jan Tuitman for remarking that ramified extensions should be avoided, by using \hyperref[lem-coleman-weierstrass-disks-zero]{Lemma~\ref{lem-coleman-weierstrass-disks-zero}}. I have had many interesting conversations with Sachi Hashimoto about Coleman integration. Finally I would like to thank the reviewers for their suggestions. I am grateful for support from the Simons Foundation as part of the Simons Collaboration on Arithmetic Geometry, Number Theory, and Computation \#550023.%
}

\subjclass[2010]{Primary 11G20; Secondary 11Y16, 14F30}

\keywords{Coleman integration, hyperelliptic curves, Kedlaya's algorithm}






\begin{abstract}
We describe a more efficient algorithm to compute \(p\)-adic Coleman integrals on odd degree hyperelliptic curves for large primes \(p\). The improvements come from using fast linear recurrence techniques when reducing differentials in Monsky-Washnitzer cohomology, a technique introduced by Harvey \cite{Harvey2007} when computing zeta functions. The complexity of our algorithm is quasilinear in \(\sqrt p\) and is polynomial in the genus and precision. We provide timings comparing our implementation with existing approaches.%
\end{abstract}

\maketitle

\section[{Introduction}]{Introduction}\label{subsection-89}

\hypertarget{p-2904}{}%
In 2001, Kedlaya introduced an algorithm for computing the action of Frobenius on the Monsky-Washnitzer cohomology of odd degree hyperelliptic curves over \(\QQ_p\) \cite{Kedlaya2001}. This has been used to compute zeta functions of the reductions modulo \(p\) of such curves, and, starting with the work of Balakrishnan-Bradshaw-Kedlaya \cite{Balakrishnan2010}, to evaluate Coleman integrals between points on them. Computation of Coleman integrals requires more information to be retained throughout the execution of the algorithm than is needed to compute only the way Frobenius acts on cohomology classes, which is all that is needed to compute zeta functions.%
\par
\hypertarget{p-2905}{}%
Harvey \cite{Harvey2007} introduced a variant of Kedlaya's algorithm, its run time in terms of \(p\) alone is \(\widetilde O(\sqrt p) \coloneqq O(\sqrt p \log^k\sqrt p))\) for some \(k\in \ZZ\). In \cite{Balakrishnan2010}  the authors asked if it is possible to use Harvey's techniques when computing Coleman integrals.%
\par
\hypertarget{p-2906}{}%
Here we show that one can obtain the same efficiency improvements in Kedlaya's algorithm as Harvey did, whilst retaining enough information to compute Coleman integrals. Specifically, we obtain the following result:%
\begin{theorem}[{}]\label{thm-coleman-harvey-main}
\hypertarget{p-2907}{}%
    Let \(X/\ZZ_p\) be a genus \(g\), odd degree hyperelliptic curve. Then for the basis \({\{\omega_i = x^i\diff x /2y\}}_{i=0}^{2g-1}\) of \(H^1_\mathrm{dR}(X)\), let \(M\) be the matrix of Frobenius acting on this basis, and \(N \in \NN\) be such that \(X\)  and \(P,Q\in X(\QQ_p)\) are known to precision \(p^N\), assume \( p \gt (2N - 1)(2g + 1)\). Then, if multiplying two \(g\times g\) matrices requires \(O(g^\omega)\) ring operations,  the vector of Coleman integrals \((\int_P^Q \omega_i)_{i = 0}^{2g-1}\) can be computed in time \(\widetilde O\left(g^\omega \sqrt{p} N^{5/2} + N^4 g^4 \log p \right)\) to absolute \(p\)-adic precision \(N - v_p(\det(M-I))\).%
\end{theorem}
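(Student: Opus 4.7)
The plan is to combine the Balakrishnan--Bradshaw--Kedlaya reduction of Coleman integration to cohomology with Harvey's $\widetilde O(\sqrt p)$ variant of Kedlaya's algorithm. Starting from the Frobenius equivariance of the Coleman integral together with the equation
\[\phi^*\omega_i = df_i + \sum_{j=0}^{2g-1} M_{ij}\omega_j\]
in Monsky--Washnitzer cohomology, once one knows $M$, the endpoint values $f_i(P), f_i(Q)$, and the tiny integrals $\int_P^{\phi(P)}\omega_i$, $\int_Q^{\phi(Q)}\omega_i$, the vector $\vec I = (\int_P^Q\omega_i)_i$ is pinned down by the linear system $(M-I)\vec I = \vec c$ with
\[c_i = f_i(P) - f_i(Q) - \int_P^{\phi(P)}\omega_i + \int_Q^{\phi(Q)}\omega_i.\]
Solving this system is where the precision loss $v_p(\det(M-I))$ enters.

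First I would compute $M$ using Harvey's algorithm in $\widetilde O(g^\omega\sqrt p\, N^{5/2})$ ring operations at precision $p^N$, exactly as in \cite{Harvey2007}. The tiny integrals can then be evaluated from the local power-series expansions of $\omega_i$ and the Frobenius lift around $P$ and $Q$, contributing only to the lower-order $\widetilde O(N^4 g^4 \log p)$ term. By \hyperref[lem-coleman-weierstrass-disks-zero]{Lemma~\ref{lem-coleman-weierstrass-disks-zero}}, it suffices to handle points lying outside the Weierstrass and infinite residue disks, so one can work over $\QQ_p$ itself without passing to a ramified extension. The central and nonobvious task is to produce $f_i(P)$ and $f_i(Q)$ at the same asymptotic cost as $M$.

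The main obstacle is extracting these endpoint evaluations inside Harvey's framework. Harvey recasts the Kedlaya reductions of each $\phi^*\omega_i$ as the evaluation of a product of $\widetilde O(p)$ linear-recurrence matrices, carried out in $\widetilde O(\sqrt p)$ matrix multiplications via the baby-steps/giant-steps trick of Bostan--Gaudry--Schost. Each reduction step silently subtracts an exact differential $d(\alpha\, x^a y^{2b-1})$ from the running differential; the primitive $f_i$ is precisely the sum of these $\alpha\, x^a y^{2b-1}$, and its value at $P$ is obtained by evaluating this sum at $(x(P), y(P))$. Because this evaluation is $\QQ_p$-linear in the stream of reduction coefficients, I would augment each recurrence matrix by an additional coordinate that accumulates the scalar contribution of its step to $f_i(P)$; the same baby-steps/giant-steps product then simultaneously returns the cohomology class in the basis $\{\omega_j\}$ \emph{and} the value $f_i(P)$, at the cost of only a constant-factor enlargement of the matrices. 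Re-evaluating with $(x(Q), y(Q))$ returns $f_i(Q)$.

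Collecting contributions, the augmented Harvey phase costs $\widetilde O(g^\omega\sqrt p\, N^{5/2})$ ring operations at precision $p^N$; the endpoint series expansions, the $\QQ_p$-arithmetic, and the final $2g\times 2g$ linear solve contribute the $\widetilde O(N^4 g^4 \log p)$ summand. The hypothesis $p > (2N-1)(2g+1)$ keeps every denominator appearing in the reduction formulas invertible modulo $p^N$, so the augmented recurrence incurs no additional precision loss beyond what Harvey's analysis already absorbs; consequently the output precision is $N - v_p(\det(M-I))$ as claimed. The most delicate point to verify carefully is that the augmented matrix-product formulation genuinely computes $f_i(P)$ as a single linear functional of the reduction stream without breaking the divide-and-conquer structure; once that is established the complexity and precision bookkeeping follow routinely from Harvey's and Balakrishnan--Bradshaw--Kedlaya's analyses.
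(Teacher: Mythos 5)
Your overall architecture is the same as the paper's: reduce the integral to the ``Coleman data'' (the matrix $M$ together with the evaluations of the primitives $f_i$ at the endpoints), and obtain those evaluations by augmenting Harvey's reduction recurrences with extra coordinates that accumulate the value of the primitive at each point of interest. But the step you defer as ``the most delicate point to verify carefully'' is precisely where the paper's one genuinely new idea lives, and as written your augmentation does not go through. Along a horizontal row $t$ the primitive accumulated is $\sum_s c_s x^s y^{-2t+1}$, where $s$ is the recurrence index. If the extra coordinate is to pick up the term $c_s\,x(P)^s$ at step $s$, the corresponding entry of the transition matrix must contain $x(P)^s$ --- exponential, not linear, in $s$. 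The Bostan--Gaudry--Schost/Harvey interval-product machinery (Theorem~\ref{thm-harvey-bgs}) requires every entry of the transition matrix to be a polynomial of degree at most one in the index, because the algorithm works by interpolating and shifting evaluations of those entries. So the observation that $f_i(P)$ is $\QQ_p$-linear in the stream of reduction coefficients is true but not sufficient; linearity in the coefficients does not give linearity of the matrix entries in the index.

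The paper's resolution is the Horner scheme (\S\ref{subsec-towards-a-faster}): write the row primitive as $(c_0 + x(c_1 + x(\cdots + x(c_r))))\,y^{-2t+1}$ and accumulate from the innermost bracket outward, so each step performs $f_{i,n} = x(P)\,f_{i,n-1} - d_{i,n}/D_H^t(s)$. After clearing the common denominator $D_H^t(s)$, the only nonzero entries in the augmented block are the constant $-1$ that extracts the coefficient being reduced and the diagonal entry $x(P_\ell)D_H^t(s)$, which is linear in $s$; only after the final step of the row does each term carry its correct power of $x$, and the constant factor $y(P)^{-2t+1}$ is applied once per row (Theorem~\ref{thm-coleman-harvey-reductions}). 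The same device with $y(P)^{-2}$ in place of $x(P)$ handles the vertical reduction. Two further points you assert without proof also need checking: that the augmented coordinates stay integral and cost no extra precision (this uses that $x(P)$ is integral and $y(P)$ a unit for $P$ outside Weierstrass disks, plus an adaptation of Kedlaya's integrality argument for the vertical step; Lemmas~\ref{lemma-66} and~\ref{lemma-67}), and, if one wants the stated dependence on the number of evaluation points, the block-triangular variant of the interval-product algorithm (Theorem~\ref{thm-harvey-bgs-block}) --- though for your fixed two endpoints the latter is not needed asymptotically.
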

As surveyed in \cite{Balakrishnan2010} there are many applications of Coleman integration in arithmetic geometry, notably they are central to the method of Chabauty-Coleman-Kim.
This method has been made explicit in some cases, such as in \cite{Balakrishnan2016} Example 2. There, and in general, when working over number fields it is useful to work only with \(p\) that split. This is an additional condition on \(p\), which often results in having to take larger \(p\), which gives one motivation for the current work.
\par
\hypertarget{p-2908}{}%
In \hyperref[sec-coleman-harvey-setup]{\(\S\)\ref{sec-coleman-harvey-setup}} and \hyperref[sec-coleman-harvey-coleman]{\(\S\)\ref{sec-coleman-harvey-coleman}} we recall the set-up for Coleman integration, and, most importantly, exactly what data is needed to compute Coleman integrals on hyperelliptic curves. In \hyperref[sec-coleman-harvey-reduction]{\(\S\)\ref{sec-coleman-harvey-reduction}} we examine the reduction procedure used by Harvey in more detail. We then come to our main new ideas, creating an appropriate recurrence that computes the data necessary for Coleman integration. In \hyperref[sec-coleman-harvey-linear]{\(\S\)\ref{sec-coleman-harvey-linear}} we introduce a modification of the linear recurrence algorithm used by Harvey, which is specialised to the type of recurrences we obtained. This is useful when computing Coleman integrals between many endpoints simultaneously. In \hyperref[sec-coleman-harvey-algorithm]{\(\S\)\ref{sec-coleman-harvey-algorithm}} we describe the main algorithm in detail. In \hyperref[sec-coleman-harvey-precision-correctness]{\(\S\)\ref{sec-coleman-harvey-precision-correctness}} and \hyperref[sec-coleman-harvey-analysis]{\(\S\)\ref{sec-coleman-harvey-analysis}} we analyse its correctness and complexity. Finally in \hyperref[sec-coleman-harvey-implementation]{\(\S\)\ref{sec-coleman-harvey-implementation}} and \hyperref[sec-coleman-harvey-examples]{\(\S\)\ref{sec-coleman-harvey-examples}} we give some timings and examples obtained with a SageMath/C\mono{++} implementation, showing its practical use.%
\typeout{************************************************}
\typeout{Section 47.2 Set-up and notation}
\typeout{************************************************}
\section[{Set-up and notation}]{Set-up and notation}\label{sec-coleman-harvey-setup}
\hypertarget{p-2909}{}%
Throughout we work with a fixed prime \(p\) and an odd degree hyperelliptic curve  \(X/\ZZ_p\), of genus \(g\ge 1\), given as \(y^2 = Q(x)\) with \(Q(x) \in\ZZ_p\lb x\rb\). Where \(Q(x) = x^{2g+1} + P(x)\) with \(\deg(P) \le 2g\). We assume that the reduction of \(Q(x)\) to \(\FF_p\lb  x\rb\) has no multiple roots. We fix a desired \(p\)-adic precision \(N \ge 1\) such that%
\begin{equation}
p\gt (2N - 1)(2g + 1)\text{.}\label{eqn-assum-p}
\end{equation}
\par
\hypertarget{par-rigid-defs}{}%
Let \(\iota\) denote the hyperelliptic involution, given on the finite affine chart as \((x,y)\mapsto (x,-y)\); the fixed points of this involution are called \terminology{Weierstrass points}.%
\par
\hypertarget{p-2911}{}%
We will make use of several notions from rigid geometry. Points of \(X(\QQ_p)\) which reduce to the same point in \(X_{\FF_p}(\FF_p)\) are said to lie in the same \terminology{residue disk}. A residue disk that contains a Weierstrass point is a \terminology{Weierstrass residue disk}.%
\typeout{************************************************}
\typeout{Section 47.3 Coleman integration}
\typeout{************************************************}
\section[{Coleman integration}]{Coleman integration}\label{sec-coleman-harvey-coleman}
\hypertarget{p-2912}{}%
Coleman integration is a \(p\)-adic (line) integration theory developed by Robert Coleman in the 1980s \cite{Coleman1982,Coleman1988,Coleman1985}. Here we briefly summarise the set-up for this theory (for more precise details, see, for example, \cite{Besser2012}). We also recall the key inputs, which are obtained from Kedlaya's algorithm, for performing explicit Coleman integration on hyperelliptic curves, as described in \cite{Balakrishnan2010}.%
\par
\hypertarget{p-2913}{}%
The setting for Coleman integration as we will be using it is via the Monsky-Washnitzer weak completion of the coordinate ring of the curve minus its Weierstrass points. So, letting \(A = \ZZ_p[x,y,y^{-1}]/(y^2 - Q(x))\), its weak completion is the space \(A^\dagger\) of series \(\sum_{i = -\infty}^\infty R_i(x)y^{-i}\) with \(R_i \in \ZZ_p[x]\), \(\deg R_i \le 2g\) subject to the condition that \(\liminf_{|i| \to \infty} v_p(R_i)/|i| \gt 0\). The \(p\)-power Frobenius on \(\overline A = A/p\)  can be lifted to a function \(\phi\colon A^\dagger \to A^\dagger\) by sending \(x \mapsto x^p\) and \(y \mapsto y^{-p}\sum_{k=0}^{\infty}\binom{-1/2}{k}(\phi(Q(x)) - Q(x)^p)^k /y^{2pk}\). We will consider differentials in \(\Omega_{A^\dagger}^1 =  A^\dagger \diff x \oplus A^\dagger \diff y/(2y\diff y - Q'(x)\diff x))\) with \(\diff\) the exterior derivative%
\begin{equation}
\diff \colon A^\dagger \to \Omega^1_{A^\dagger}; \; \sum_{i=-\infty}^\infty \frac{R_i(x)}{y^i} \mapsto \sum_{i=-\infty}^\infty R'_i(x) y^{-i}\diff x-R_i(x) iy^{-i-1} \diff y\text{.}\label{eqn-exterior-d}
\end{equation}
We will say that \(f\) is a \terminology{primitive} of the exact differential \(\diff f\). We then define the Monsky-Washnitzer cohomology of \(A\) to be \(H^1_{\mathrm{MW}}(\overline A) = \Omega^1_{A^\dagger}\otimes \QQ_p/\diff(A^\dagger\otimes \QQ_p)\). The action of Frobenius and of the hyperelliptic involution can be extended to \(\Omega^1_{A^\dagger}\) and \(H^1_{\mathrm{MW}}(\overline A)\) and the actions of \(\phi\) and \(\iota\) commute. In particular we have an eigenspace decomposition of all of these spaces under \(\iota\) into \terminology{even} and \terminology{odd} parts; the odd part will be denoted with a \(-\) superscript. Let  \(A_\mathrm{loc}(X)\) denote the \(\QQ_p\)-valued functions on \(X(\QQ_p)\) which are given by a power series on each residue disk.%
\begin{theorem}[{Coleman}]\label{thm-coleman-harvey-int}
\hypertarget{p-2914}{}%
There is a unique (up to a global constant of integration) \(\QQ_p\)-linear integration map \(\int\colon \Omega_{A^\dagger}^1\otimes \QQ_p \to A_\mathrm{loc} (X)\) satisfying:\leavevmode%
\begin{enumerate}
\item\hypertarget{li-599}{}\hypertarget{p-2915}{}%
Frobenius equivariance, \(\int \phi^*\omega = \phi^*\int \omega\),%
\item\hypertarget{li-600}{}\hypertarget{p-2916}{}%
the fundamental theorem of calculus, \(\diff \circ \int\) is the identity on \(\Omega_{A^\dagger}^1\otimes \QQ_p\),%
\item\hypertarget{li-601}{}\hypertarget{p-2917}{}%
    and \(\int\circ\diff\) is the natural map \(A^\dagger \to A_{\mathrm{loc}}/(\text{constant functions})\).%
\end{enumerate}
Given points \(P,Q \in X(\QQ_p)\) the definite integral \(\int_P^Q \omega\) is then defined as \(\left(\int \omega\right)(Q) - \left(\int \omega\right)(P)\), which is a well-defined function of \(P,Q\).%
\end{theorem}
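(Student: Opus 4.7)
The plan is to build the integration map locally on each residue disk and then to rigidify the remaining gauge freedom using property (1). On a residue disk $D \subset X(\QQ_p)$ with local parameter $t$, any form $\omega \in \Omega^1_{A^\dagger}\otimes \QQ_p$ restricts to a convergent power series $h(t)\diff t$, and term-wise integration produces a primitive unique up to an additive constant on $D$. Property (3) uniquely determines $\int \diff f = f + c$, handling exact forms; the substance of the theorem therefore lies in fixing the per-disk constants for cohomologically nontrivial classes.

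To pin down these constants I would fix a basis $\omega_1,\ldots,\omega_{2g}$ of $H^1_{\mathrm{MW}}(\overline A)\otimes \QQ_p$ and write $\phi^* \omega_i = \sum_j M_{ji}\omega_j + \diff g_i$ for some $g_i \in A^\dagger\otimes\QQ_p$. If $F_i$ is a proposed primitive of $\omega_i$, combining (1) and (2) forces
\begin{equation*}
\phi^* F_i - \sum_j M_{ji} F_j = g_i + c_i
\end{equation*}
on $X(\QQ_p)$ for some $c_i \in \QQ_p$. Since $\phi$ fixes each residue disk above an $\FF_p$-point setwise and admits a unique fixed point $\tilde P_D \in D$ (the Teichmüller representative), evaluating at $\tilde P_D$ yields $(I - M^{\transpose})F(\tilde P_D) = g(\tilde P_D) + c$. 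The eigenvalues of $M$ are Weil numbers of archimedean absolute value $\sqrt p$, so $I - M^{\transpose}$ is invertible over $\QQ_p$ and $F(\tilde P_D)$ is uniquely determined on every residue disk once the overall constant $c$ is chosen; the local primitive on $D$ is then determined, yielding a globally defined $F_i \in A_{\mathrm{loc}}(X)$. Extending $\QQ_p$-linearly gives $\int$ on all of $\Omega^1_{A^\dagger}\otimes\QQ_p$.

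The main obstacle is verifying that the locally defined primitives so obtained genuinely satisfy the Frobenius equation on all of each disk, and not merely at the Teichmüller points. One checks this by differentiating the proposed identity using property (2) and the chain rule for $\phi$, reducing it to the known cohomological relation $\phi^*\omega_i - \sum_j M_{ji}\omega_j = \diff g_i$, and then matching the constant of integration on $D$ via the value at $\tilde P_D$: two rigid analytic functions on $D$ with equal derivatives and equal values at $\tilde P_D$ coincide. Uniqueness modulo additive constants follows from the same invertibility of $I - M^{\transpose}$: if two candidates differ by a locally constant $\phi$-compatible function, Teichmüller evaluation shows the difference takes the same value on every disk, hence is a global constant. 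Well-definedness of $\int_P^Q\omega = (\int\omega)(Q) - (\int\omega)(P)$ is then immediate, since both endpoints absorb the same constant.
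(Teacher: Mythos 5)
The paper does not prove this theorem; it is quoted as background (``Theorem (Coleman)'') with the reader referred to Coleman's papers and to Besser's survey, so there is no internal proof to compare against. Judged on its own terms, your sketch is the standard existence-and-uniqueness argument (Dwork's principle of analytic continuation along Frobenius): integrate termwise on each residue disk, observe that Frobenius preserves each disk over an \(\FF_p\)-point and fixes exactly the Teichm\"uller point, and use the invertibility of \(I-M\) (no Frobenius eigenvalue equals \(1\)) to solve for the values of the primitives at the Teichm\"uller points, hence for the per-disk constants. The consistency check (differentiate the functional equation, reduce to the cohomological identity, and match values at the fixed point) and the uniqueness argument are both correctly structured. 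This is also precisely the mechanism the paper exploits computationally in \hyperref[eqn-coleman-teich-infty-system]{(\ref{eqn-coleman-teich-infty-system})}.

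There is, however, a genuine gap at the very first step in the generality you (and the theorem as loosely stated) claim. You assert that any \(\omega\in\Omega^1_{A^\dagger}\otimes\QQ_p\) restricts to a convergent power series \(h(t)\,\diff t\) on every residue disk. That fails on Weierstrass residue disks: elements of \(A^\dagger\) involve arbitrary negative powers of \(y\), so on such a disk \(\omega\) is a Laurent-type series converging only on an annulus, its primitive need not extend to the disk, and a term \(t^{-1}\diff t\) has no single-valued primitive at all without choosing a branch of the \(p\)-adic logarithm. The correct statement works on wide open subspaces avoiding small disks around the Weierstrass points (and around \(\infty\)), with a fixed branch of \(\log\); note \(\phi\) itself is only defined on such a wide open, not on the full Weierstrass disks, so even the assertion that \(\phi\) preserves each residue disk needs this framework. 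The paper sidesteps all of this by using \hyperref[lem-coleman-weierstrass-disks-zero]{Lemma~\ref{lem-coleman-weierstrass-disks-zero}} to avoid ever evaluating primitives in Weierstrass disks. Two smaller inaccuracies: \(H^1_{\mathrm{MW}}(\overline A)\) has dimension \(4g+1\), not \(2g\) (only the odd part \(H^1_{\mathrm{MW}}(\overline A)^-\) is \(2g\)-dimensional), and on the extra classes the Frobenius eigenvalues have absolute value \(p\) rather than \(\sqrt p\) --- still different from \(1\), so your invertibility argument survives, but the justification as written is not quite right.
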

\hypertarget{p-2918}{}%
After fixing a basis \(\{\omega_i\}_{i=0}^{2g-1}\) of \(H^1_{\mathrm{MW}}(\overline A)^-=H^1_\dR(X)\), any 1-form of the second kind \(\omega \in \Omega^1_{A^\dagger}\) can be expressed as \(\omega = \diff f + \sum_{i=0}^{2g-1} a_i \omega_i\), \(f\in A^\dagger\), so by \hyperref[thm-coleman-harvey-int]{Theorem~\ref{thm-coleman-harvey-int}} we see that for some \(a_i\in \QQ_p\)%
\begin{equation}
\int_P^Q \omega = f(Q)  - f(P) + \sum_{i=0}^{2g-1} a_i \int_P^Q \omega_i\text{.}\label{eqn-ftc-reduction}
\end{equation}
We can therefore reduce to the case of integrating only the basis differentials \(\omega_i\) and evaluating the primitive \(f\). The complexity of reducing to this case depends on how \(\omega\) is presented. For example, if \(\omega\) has many terms, the total run time can be dominated by finding \(f\) and evaluating \(f(Q) - f(P)\) in the above. So we will focus on computing \(\left\{\int_P^Q\omega_i \right\}_{i=0}^{2g-1}\). In many applications, all that we need to integrate are \(\QQ_p\)-linear combinations of the basis differentials.%
\par
\hypertarget{p-2919}{}%
The work of Balakrishnan-Bradshaw-Kedlaya \cite{Balakrishnan2010} describes how to explicitly compute Coleman integrals for differentials on odd degree hyperelliptic curves. They describe how to reduce the problem of computing general Coleman integrals between two points to that of finding a matrix \(M\) and  \(f_i\in A^\dagger\) such that%
\begin{equation}
\phi^*\omega_i =\diff f_i +  \sum_{j} M_{ij} \omega_j \in \Omega^1_{A^\dagger}\text{.}\label{eqn-frob-basis-decomp}
\end{equation}
\par
\hypertarget{p-2920}{}%
Before stating a form of their algorithm, we recall a useful result which allows us to deal with the difficulties arising when the endpoints of the integral are Weierstrass. This can be problematic, as we need to evaluate primitives as in \hyperref[eqn-ftc-reduction]{(\ref{eqn-ftc-reduction})}; if the endpoints are in Weierstrass residue disks, these power series may not converge.%
\begin{lemma}[{\cite{Balakrishnan2010} Lemma 16}]\label{lem-coleman-weierstrass-disks-zero}
\hypertarget{p-2921}{}%
Let \(P, Q \in X(\QQ_p)\) with \(Q\) Weierstrass and let \(\omega \in \Omega_{A^\dagger}^{1,-}\) be an odd differential without poles at \(P,Q\). Then \(\int^Q_P \omega = \frac 12 \int_P^{\iota(P)} \omega\).%
\par
\hypertarget{p-2922}{}%
In particular, if \(P\) is also a Weierstrass point, then the integral is zero.%
\end{lemma}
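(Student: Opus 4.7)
The plan is to exploit two structural features of Coleman integration: additivity of the definite integral in its endpoints (which is immediate from the fact that $\int \omega$ is a single locally analytic primitive, so $\int_P^Q\omega = F(Q)-F(P)$ where $F = \int\omega$) and functoriality under morphisms, applied here to the hyperelliptic involution $\iota$. Since $\omega$ is odd, $\iota^*\omega = -\omega$; since $Q$ is Weierstrass, $\iota(Q)=Q$.

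First I would check that $\omega$ has no pole at $\iota(P)$ either, so that the various definite integrals below all make sense: indeed, $\iota$ is an automorphism and $\iota^*\omega = -\omega$, so any pole at $\iota(P)$ would come from a pole at $P$, which was ruled out by hypothesis. Next, I would apply functoriality of Coleman integration to the automorphism $\iota$ with the pair of endpoints $(P,Q)$:
\begin{equation*}
\int_{\iota(P)}^{\iota(Q)} \omega \;=\; \int_P^Q \iota^*\omega \;=\; -\int_P^Q \omega.
\end{equation*}
Using $\iota(Q)=Q$ this becomes $\int_{\iota(P)}^Q \omega = -\int_P^Q \omega$, i.e.\ $\int_Q^{\iota(P)}\omega = \int_P^Q\omega$.

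Then I would combine this with additivity of the integral in its endpoints:
\begin{equation*}
\int_P^{\iota(P)}\omega \;=\; \int_P^Q \omega + \int_Q^{\iota(P)}\omega \;=\; 2\int_P^Q\omega,
\end{equation*}
which yields the stated identity $\int_P^Q\omega = \tfrac12 \int_P^{\iota(P)}\omega$. For the final assertion, when $P$ is also Weierstrass one has $\iota(P)=P$, so $\int_P^{\iota(P)}\omega = \int_P^P\omega = 0$, whence $\int_P^Q\omega=0$.

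There is essentially no obstacle here beyond invoking the right formal properties; the only subtlety is that $\iota$ could in principle interact badly with Coleman's locally analytic primitive across residue disks. This is handled cleanly by Coleman's functoriality for rigid-analytic morphisms, which is part of the package in Theorem~\ref{thm-coleman-harvey-int} and its extensions (cf.\ \cite{Besser2012}); the main substantive input is simply that $\iota$ acts as $-1$ on the odd part $\Omega^{1,-}_{A^\dagger}$ and fixes Weierstrass points.
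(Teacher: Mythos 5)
Your argument is correct and is essentially the standard proof of \cite{Balakrishnan2010} Lemma 16: the paper itself states this lemma by citation without reproducing a proof, and the cited proof is exactly your combination of $\iota$-equivariance of the Coleman integral, $\iota^*\omega=-\omega$ on the odd part, $\iota(Q)=Q$, and additivity in the endpoints. The one point worth making explicit is that the equivariance $\int_{\iota(P)}^{\iota(Q)}\omega=\int_P^Q\iota^*\omega$ is not among the three properties listed in Theorem~\ref{thm-coleman-harvey-int}; it follows from the uniqueness statement there because $\iota$ commutes with the Frobenius lift, which you correctly identify as the only substantive input.
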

\hypertarget{p-2923}{}%
\hyperref[lem-coleman-weierstrass-disks-zero]{Lemma~\ref{lem-coleman-weierstrass-disks-zero}} allows us to express general integrals as linear combinations of integrals between two points in non-Weierstrass residue disks and integrals between two points in the same residue disk (known as \terminology{tiny integrals}). Evaluating tiny integrals uses formal integration of power series, see \cite{Balakrishnan2010} Algorithm 8.%
\par
\hypertarget{p-2924}{}%
Note that \(\infty\) is a Weierstrass point so \hyperref[lem-coleman-weierstrass-disks-zero]{Lemma~\ref{lem-coleman-weierstrass-disks-zero}} applies with \(Q = \infty\); integrals based at \(\infty\) can be rewritten as a linear combination of a tiny integral and an integral between two non-Weierstrass points. Specifically, for a Teichmüller point \(P\), if we know the matrix \(M\) expressing the action of Frobenius on the basis differentials \(\omega_i\), we can use the Frobenius equivariance of the Coleman integral to deduce%
\begin{equation}
\left(\begin{smallmatrix} \vdots \\ \int_{P}^\infty \omega_i \\\vdots \end{smallmatrix}\right) = \frac 12 \left(\begin{smallmatrix} \vdots \\ \int_{P}^{\iota(P)} \omega_i  \\ \vdots \end{smallmatrix}\right) = \frac{(M - I)^{-1}}{2} \left(\begin{smallmatrix}\vdots \\ f_i(P) - f_i(\iota(P)) \\ \vdots \end{smallmatrix}\right)= (M - I)^{-1} \left(\begin{smallmatrix}\vdots \\ f_i(P) \\ \vdots \end{smallmatrix}\right)\label{eqn-coleman-teich-infty-system}
\end{equation}
The last equality holds as we are using odd differentials, so the \(\diff f_i\) must also be odd, so from the expansion of \hyperref[eqn-exterior-d]{(\ref{eqn-exterior-d})} we see that the \(f_i\) must also be odd (up to the constant term, which cancels).%
\par
\hypertarget{p-2925}{}%
So we will fix \(\infty\) as our basepoint and compute only integrals of the form \(\int_P^\infty\omega\); general integrals can be obtained by subtracting two of the above type. We will use the following algorithm, c.f. \cite{Balakrishnan2010} Remark 15:%
\begin{algorithm}[{}]\label{algo-coleman}
\hypertarget{p-2926}{}%
Input: \(P\in X(\QQ_p)\), the matrix of Frobenius \(M\), and if \(P\) is not in a Weierstrass residue disk,  \(\{f_i(P')\}_{ i =0}^{ 2g-1}\) for the unique Teichmüller point \(P'\) in the same residue disk as \(P\), and \(f_i\) as in \hyperref[eqn-frob-basis-decomp]{(\ref{eqn-frob-basis-decomp})}.%
\par
\hypertarget{p-2927}{}%
\noindent Output: \(\left\{\int_P^\infty \omega_i\right\}\) for \(0 \le i \le 2g-1\).\leavevmode%
\begin{enumerate}
\item\hypertarget{item-algo-coleman-step1}{}\hypertarget{p-2928}{}%
If \(P\) is in a Weierstrass residue disk: Let \(P'\) be the Weierstrass point in the same residue disk, so that \(\int_{P'}^\infty \omega_i = 0\) for all \(i\).%
\par
\hypertarget{p-2929}{}%
\noindent Else: Let \(P'\) be the (unique) Teichmüller point in the same residue disk as \(P\). Then compute the vector of \(\int_{P'}^\infty \omega_i\) using \hyperref[eqn-coleman-teich-infty-system]{(\ref{eqn-coleman-teich-infty-system})}.%
\item\hypertarget{item-algo-coleman-step2}{}\hypertarget{p-2930}{}%
For each \(i\), compute the tiny integral \(\int_{P}^{P'} \omega_i\), as in \cite{Balakrishnan2010} Algorithm 8.%
\item\hypertarget{item-algo-coleman-step3}{}\hypertarget{p-2931}{}%
For each \(i\), sum the result of Steps 1 and 2 to get \(\int_{P}^\infty \omega_i= \int_P^{P'} \omega_i + \int_{P'}^\infty \omega_i\).%
\end{enumerate}
\end{algorithm}
\hypertarget{p-2932}{}%
Variants of this algorithm are possible, c.f. \cite{Balakrishnan2010} Algorithm 11. From the version stated above, it is clear that, beyond solving a linear system and computing tiny integrals, the matrix of Frobenius and evaluations of the primitives \(f_i\) at Teichmüller points in non-Weierstrass residue disks are all the input data that is needed to compute arbitrary Coleman integrals. We shall refer to this data as the \terminology{Coleman data}. To compute Coleman integrals efficiently, we require an efficient way of computing this data, possibly for several disks of interest.%
\begin{remark}[]\label{remark-82}
\hypertarget{p-2933}{}%
We do not need to compute the \(f_i\) themselves to compute integrals, only evaluations at Teichmüller points in prescribed non-Weierstrass residue disks. This simplification is key to our ability to write down a suitable recurrence. Moreover, once the Coleman data is computed, it can be saved and will not need to be recomputed if integrals between other points in the same residue disks are required.%
\end{remark}
\typeout{************************************************}
\typeout{Section 47.4 Reductions in cohomology}
\typeout{************************************************}
\section[{Reductions in cohomology}]{Reductions in cohomology}\label{sec-coleman-harvey-reduction}
\typeout{************************************************}
\typeout{Subsection 47.4.1 Kedlaya's algorithm and Harvey's work}
\typeout{************************************************}
\subsection[{Kedlaya's algorithm and Harvey's work}]{Kedlaya's algorithm and Harvey's work}\label{subsection-102}
\hypertarget{p-2934}{}%
Kedlaya's algorithm computes the action of Frobenius on Monsky-Washnitzer cohomology up to a specified precision. The general strategy is to begin with a finite \(p\)-adic approximation of \(\phi^* \omega\) as a (Laurent) polynomial in \(x\) and \(y\) multiplied by the differential \(\diff x/2y\). This is reduced step-by-step via cohomologous differentials of lower polynomial degree, by subtracting appropriate exact forms \(\diff g\) for polynomials \(g\). This process is continued until one is left with a \(\QQ_p\)-linear combination of basis elements, and we have an expression of the form \hyperref[eqn-frob-basis-decomp]{(\ref{eqn-frob-basis-decomp})}. For a given basis \(\{\omega_i\}\) of \(H^1_{\mathrm{MW}}(\overline A)^-\), writing each \(\phi^*\omega_i\) in terms of this basis results in a matrix of Frobenius acting on \(H^1_{\mathrm{MW}}(\overline A)^-\).%
\par
\hypertarget{p-2935}{}%
The innovation in \cite{Harvey2007} is to express the reduction process as a linear recurrence, where the coefficients are linear polynomials in the index of the recurrence. A term several steps later in such recurrences can then be found more efficiently than the straightforward sequential approach, via the algorithm of Bostan-Gaudry-Schost \cite{Bostan2007} Theorem 15. Here we also ultimately appeal to these methods, and so we must examine in more detail the polynomials \(g\) used in the reduction steps. We will describe the sum of the evaluations of these \(g\) at points of interest as a linear recurrence, so that they may be computed along with the reductions.%
\par
\hypertarget{p-2936}{}%
We use the basis of \(H^1_{\mathrm{MW}}(\overline A)^-\) consisting of \(\omega_i = x^i \diff x/2y\) for \(0 \le i \le 2g - 1\). This differs by a factor of \(2\) from the basis used by Harvey and Kedlaya; this choice reduces the number of \(2\)'s appearing in our formulae and so appears more natural here. Changing the basis by a scalar multiple has no effect on the matrix of Frobenius, only the exact differentials. An approximation to \(\phi^* \omega_i\) is given in \cite{Harvey2007} (4.1) by letting \(C_{j,r}\) be the coefficient of \(x^r\) in \(Q(x)^j\) and \(B_{j,r} = p \phi(C_{j,r}) \sum_{k=j}^{N-1} (-1)^{k+j} \binom{-1/2}{k} \binom{k}{j} \in \ZZ_p\) so that%
\begin{equation}
\phi^* \omega_i \equiv\sum_{j=0}^{N-1} \sum_{r=0}^{(2g+1)j} B_{j,r} x^{p(i+r+1) - 1}y^{-p(2j+1) + 1} \frac{\diff x}{2y}\pmod{p^N}\text{.}\label{eqn-approx-frobomegai}
\end{equation}
 In \hyperref[eqn-approx-frobomegai]{(\ref{eqn-approx-frobomegai})}, there are only \((2g + 1)\frac{N(N-1)}{2} + N\) terms in total and the exponents of \(x\) and \(y\) that appear are always congruent to \(-1\) or \(1\) mod \(p\) respectively.%
\par
\hypertarget{p-2937}{}%
As in \cite{Harvey2007} Section 5, we work with finite-dimensional vector spaces over \(\QQ_p\)%
\begin{equation}
W_{s,t} = \left\{ f(x) x^s y^{-2t}\frac{\diff x}{2y}: \deg f \le 2g \right\}  = \left\langle x^i x^s y^{-2t}\frac{\diff x }{2y}\right\rangle_{i=0}^{2g}\text{,}\label{eqn-basis-Wst}
\end{equation}
for \(s \ge -1,\,t\ge 0\), where, in addition, we restrict \(W_{-1,t}\) to be the subspace of the above for which the coefficient of \(x^{-1}\) is zero (i.e.\@ for which \(f(0) = 0\)).%
\par
\hypertarget{p-2938}{}%
Notice that \(W_{-1,0}\) is naturally identified with \(H^1_{\mathrm{MW}}(\overline A)^-\) with the basis chosen above, so that \(\omega_i\) is the \(i\)th basis element of \(W_{-1,0}\). In order to derive an expression for \(\phi^* \omega_i\) as a linear combination of the other basis elements, we begin with the approximation of \(\phi^* \omega_i\) from \hyperref[eqn-approx-frobomegai]{(\ref{eqn-approx-frobomegai})}. Then starting with the terms of highest degree in \(x\), which are each inside of some \(W_{s,t}\) we reduce ``horizontally'', finding a cohomologous element of \(W_{s-1,t}\) by subtracting an appropriate exact differential. This process is repeated until \(s = -1\), but whenever we reach a space \(W_{s,t}\) containing a term from \hyperref[eqn-approx-frobomegai]{(\ref{eqn-approx-frobomegai})}, we add it to the current differential under consideration. We do this for each  \(t\) appearing as an exponent for a monomial in the original approximation, and for each such \(t\) we obtain an element of \(W_{-1,t}\). We then reduce ``vertically'', beginning with the largest \(t\) we have, we subtract appropriate exact differentials to reduce the element of each \(W_{-1,t}\) to a cohomologous one in \(W_{-1, t-1}\) while \(t \ge 1\). This is continued until we have reduced everything to the space \(W_{-1,0}\), and we have  obtained a linear combination of the basis differentials that is cohomologous to \(\phi^*\omega_i\) up to the specified precision.%
\par
\hypertarget{p-2939}{}%
Note that many horizontal \terminology{rows} will not be considered at all. When \(p\) is large enough, most steps simply involve reducing terms we already have, as there are comparatively few terms in the \hyperref[eqn-approx-frobomegai]{(\ref{eqn-approx-frobomegai})} compared to the total degree. Doing multiple reduction steps quickly will therefore improve the run time of this procedure, even though we have to add new terms occasionally. This is where Harvey applies linear recurrence techniques to speed up this reduction process. We now state the reductions we will use; compared to  \cite{Harvey2007} (5.2) and (5.3) we must be more explicit about the exact form we are subtracting, as this data is important for us.%
\typeout{************************************************}
\typeout{Subsection 47.4.2 Horizontal reduction}
\typeout{************************************************}
\subsection[{Horizontal reduction}]{Horizontal reduction}\label{subsection-103}
\hypertarget{p-2940}{}%
To reduce horizontally from \(W_{s,t}\) to \(W_{s-1,t}\), we express the highest order basis element \(x^{2g} x^{s} y^{-2t}\diff x /2y\in W_{s,t}\) as a cohomologous term in \(W_{s-1,t}\). The other basis elements are naturally basis elements for \(W_{s-1,t}\) just with their indices shifted by 1.%
\begin{lemma}[{Horizontal reduction}]\label{lem-harvey-horizontal-reduction}
\hypertarget{p-2941}{}%
We have%
\begin{gather}
x^{2g}x^{s} y^{-2t}\frac{\diff x}{2y} - \frac{-1}{(2t - 1)(2g + 1) -2s} \diff(x^s y^{-2t + 1})\notag\\
= \frac{2sP(x) - (2t - 1 )xP'(x)}{(2t - 1)(2g + 1) -2s} x^{s-1} y^{-2t}\frac{\diff x}{2y} \in W_{s-1,t}\text{.}\label{mrow-254}
\end{gather}
\end{lemma}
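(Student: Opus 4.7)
The plan is a direct computation: unpack $\diff(x^s y^{-2t+1})$ by the Leibniz rule, use the relation $2y\,\diff y = Q'(x)\,\diff x$ that holds in $\Omega^1_{A^\dagger}$, then substitute $Q(x) = x^{2g+1}+P(x)$ and collect terms.

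First I would write
\begin{equation*}
\diff(x^s y^{-2t+1}) = s x^{s-1}y^{-2t+1}\diff x + (-2t+1)x^s y^{-2t}\,\diff y .
\end{equation*}
Replacing $\diff y$ by $\tfrac{Q'(x)}{2y}\diff x$ and putting everything over the common denominator $2y$, I get
\begin{equation*}
\diff(x^s y^{-2t+1}) = \bigl[\,2s\, x^{s-1}\, y^{2}\ -\ (2t-1)\, x^s Q'(x)\,\bigr]\, y^{-2t}\,\frac{\diff x}{2y},
\end{equation*}
the factor $y^2$ coming from converting $y^{-2t+1}\diff x$ to $y^{-2t+2}\frac{\diff x}{2y}$ with a compensating factor of $2$.

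Next I would substitute $y^2 = Q(x) = x^{2g+1}+P(x)$ and $Q'(x) = (2g+1)x^{2g}+P'(x)$. Isolating the top-degree monomial $x^{s+2g}$, the bracketed polynomial becomes
\begin{equation*}
\bigl[\,2s-(2t-1)(2g+1)\,\bigr]x^{s+2g} \ +\ \bigl[\,2sP(x)-(2t-1)xP'(x)\,\bigr]\, x^{s-1}.
\end{equation*}
The coefficient of $x^{s+2g}$ is exactly $-\bigl[(2t-1)(2g+1)-2s\bigr]$, so after dividing the whole identity by $-\bigl[(2t-1)(2g+1)-2s\bigr]$ (which is nonzero since $p$ is larger than $(2N-1)(2g+1)$, hence larger than any $(2t-1)(2g+1)$ encountered, making the denominator a unit in $\ZZ_p$ once $s\le 2g$; in fact here we just need it nonzero as a rational), solving for $x^{2g}x^s y^{-2t}\tfrac{\diff x}{2y}$ yields precisely the claimed identity.

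The computation has no real obstacle; the only thing to be careful about is bookkeeping of the factor $2y$ versus $y$ when converting between $\diff x$ and $\frac{\diff x}{2y}$ (this is why the formula has $2s$ and $(2t-1)$ rather than $s$ and $(-2t+1)/2$), and to observe that the polynomial $2sP(x)-(2t-1)xP'(x)$ has degree at most $2g$, confirming that the right-hand side genuinely lives in $W_{s-1,t}$ as required.
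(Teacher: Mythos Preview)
Your proof is correct and follows essentially the same direct computation as the paper: expand $\diff(x^s y^{-2t+1})$ via Leibniz, use $2y\,\diff y = Q'(x)\,\diff x$, substitute $Q(x)=x^{2g+1}+P(x)$, and isolate the $x^{s+2g}$ term. Your added remarks about the nonvanishing of the denominator (which is automatic here since $(2t-1)(2g+1)$ is odd while $2s$ is even) and about $\deg\bigl(2sP(x)-(2t-1)xP'(x)\bigr)\le 2g$ ensuring membership in $W_{s-1,t}$ are useful clarifications not spelled out in the paper.
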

\begin{proof}\hypertarget{proof-307}{}
\hypertarget{p-2942}{}%
We directly compute%
\begin{align}
\diff(x^s y^{-2t + 1}) = \amp sx^{s-1} y^{-2t + 1} \diff x + (-2t + 1 ) x^s y^{-2t} \diff y\notag\\
= \amp\left( sx^{s-1} y^{-2t + 1} + \frac12 (-2t + 1 ) x^s y^{-2t-1}Q'(x)\right) \diff x\notag\\
= \amp\left(2 s Q(x) - (2t - 1 ) xQ'(x)\right) x^{s-1} y^{-2t}\frac{\diff x}{2y}\notag\\
= \amp\left(2s - (2t - 1 ) (2g+1)  \right)x^{2g+1} x^{s-1} y^{-2t}\frac{\diff x}{2y}\notag\\
\amp + \left(2sP(x) - (2t - 1 )xP'(x) \right) x^{s-1} y^{-2t}\frac{\diff x}{2y}\text{.}\label{mrow-259}
\end{align}
Therefore, by subtracting \(\frac{1}{2s - (2t - 1)(2g + 1)} \diff(x^s y^{-2t + 1})\) from \(x^{2g}x^{s} y^{-2t}\diff x/2y\), the remaining terms are all as stated, and of lower degree.%
\end{proof}
\typeout{************************************************}
\typeout{Subsection 47.4.3 Vertical reduction}
\typeout{************************************************}
\subsection[{Vertical reduction}]{Vertical reduction}\label{subsection-104}
\hypertarget{p-2943}{}%
To reduce vertically from \(W_{-1,t}\) to \(W_{-1,t-1}\), we express the \(2g\) basis elements \(x^{i} y^{-2t}\diff x /2y\in W_{-1,t}\) as cohomologous terms in \(W_{-1,t-1}\).%
\begin{lemma}[{Vertical reduction}]\label{lem-harvey-vertical-reduction}
\hypertarget{p-2944}{}%
Let \(R_i(x), S_i(x)\in \ZZ_p(x)\) be such that \(x^i = R_i(x) Q(x) + S_i(x)Q'(x)\) with \(\deg R_i \le 2g-1\), \(\deg S_i\le 2g\). Then%
\begin{equation*}
x^{i} y^{-2t}\frac{\diff x}{2y} -\frac{-1}{2t - 1} \diff(S_i(x)y^{-2t + 1}) = \frac{(2t - 1)R_i(x) + 2S'_i(x)}{2t -1} y^{-2(t-1)} \frac{\diff x}{2y} \in W_{-1,t-1}\text{.}
\end{equation*}
\end{lemma}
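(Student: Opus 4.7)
The proof will be a direct computation, closely mirroring the one for Lemma~\ref{lem-harvey-horizontal-reduction}. My plan is first to expand $\diff(S_i(x) y^{-2t+1})$ by the Leibniz rule and eliminate $\diff y$ using the relation $2y\,\diff y = Q'(x)\,\diff x$ (which comes from differentiating $y^2 = Q(x)$ via (\ref{eqn-exterior-d})); this produces
\[
\diff\bigl(S_i(x) y^{-2t+1}\bigr) = S_i'(x)\, y^{-2t+1}\,\diff x \;-\; \tfrac{2t-1}{2}\, S_i(x) Q'(x)\, y^{-2t-1}\,\diff x .
\]

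Next I would bring both sides of the claimed identity to the common form ``polynomial in $x$ times $y^{-2t-1}\,\diff x$'' using $y^{-2t+1} = Q(x)\, y^{-2t-1}$. Writing $x^i y^{-2t}(\diff x/2y) = (x^i/2)\, y^{-2t-1}\,\diff x$ and substituting the B\'ezout decomposition $x^i = R_i(x) Q(x) + S_i(x) Q'(x)$ into the left-hand side of the lemma, the two contributions involving $S_i Q'$ cancel exactly, leaving a multiple of $Q(x)\,y^{-2t-1}\,\diff x$. Regrouping and converting this back to the normal form $y^{-2(t-1)}\,\diff x/(2y)$ then yields precisely the right-hand side of the lemma.

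Two small verifications close the argument: the degree bound $\deg\bigl((2t-1) R_i + 2 S_i'\bigr) \le 2g$ is immediate from the hypotheses $\deg R_i \le 2g-1$ and $\deg S_i \le 2g$, so the output really does lie in $W_{-1, t-1}$; and the B\'ezout decomposition itself exists in $\ZZ_p[x]$ with the stated degree bounds because the standing assumption that $\overline{Q}$ has no repeated roots makes $\disc(Q)$, and hence the resultant of $Q$ and $Q'$, a unit in $\ZZ_p$. There is no real obstacle in this proof; the only non-routine decision is the choice of primitive $S_i(x)\, y^{-2t+1}$, and that choice is forced by the requirement that the reduced differential land in $W_{-1,\,t-1}$.
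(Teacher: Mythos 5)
Your proof is correct and takes essentially the same route as the paper's: a direct computation that combines the B\'ezout identity \(x^i = R_iQ + S_iQ'\) with the relation \(2y\,\diff y = Q'(x)\,\diff x\) so that the \(S_iQ'\) contributions cancel, the only cosmetic difference being that you normalize everything to the form \((\cdot)\,y^{-2t-1}\diff x\) while the paper carries a \(\diff y\) term until the end. One small correction: membership in \(W_{-1,t-1}\) requires \(\deg\bigl((2t-1)R_i + 2S_i'\bigr)\le 2g-1\) rather than \(\le 2g\) (since elements of \(W_{-1,t-1}\) are \(f(x)x^{-1}y^{-2(t-1)}\diff x/2y\) with \(f(0)=0\)), but this still follows immediately from \(\deg R_i\le 2g-1\) and \(\deg S_i\le 2g\).
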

\begin{proof}\hypertarget{proof-308}{}
\hypertarget{p-2945}{}%
We have that%
\begin{gather}
x^{i} y^{-2t}\frac{\diff x}{2y} = \left(R_i(x) Q(x) + S_i(x)Q'(x)\right)y^{-2t}\frac{\diff x}{2y}= R_i(x) y^{-2t + 2}\frac{\diff x}{2y}+ S_i(x)y^{-2t}\diff y\text{,}\notag
\end{gather}
and also that \(\diff(S_i(x)y^{-2t + 1}) = S'_i(x)  y^{-2t + 1} \diff x + (-2t + 1 ) S_i(x) y^{-2t} \diff y\). Therefore by subtracting \(\frac{1}{-2t + 1} \diff(S_i(x)y^{-2t + 1})\) from \(x^{i} y^{-2t}\diff x/2y\), we see that%
\begin{align}
x^{i} y^{-2t}\frac{\diff x}{2y} \amp\sim R_i(x) y^{-2t + 2}\frac{\diff x}{2y} + \frac{1}{2t- 1} S'_i(x)  y^{-2t + 1} \diff x\notag\\
\amp = \frac{(2t - 1)R_i(x) + 2S'_i(x)}{2t -1} y^{-2(t-1)} \frac{\diff x}{2y}\text{.}\label{mrow-262}
\end{align}
\end{proof}
\typeout{************************************************}
\typeout{Subsection 47.4.4 Towards a faster algorithm}
\typeout{************************************************}
\subsection[{Towards a faster algorithm}]{Towards a faster algorithm}\label{subsec-towards-a-faster}
\hypertarget{p-2946}{}%
In order to make use of the same linear recurrence techniques as Harvey, we express the reduction process as we descend through the indices \(s,\,t\) as a linear recurrence with coefficients linear polynomials in \(s,\,t\). We describe such a recurrence that retains enough information to compute Coleman integrals. By working with a number of evaluations of the primitives on prescribed points on the curve, rather than the primitives themselves as power series, we only have to deal with a vector of fixed size at each step. This is preferable to maintaining a power series as we reduce, adding terms at each step.%
\par
\hypertarget{p-2947}{}%
We will now give an idea of the approach, giving the details in the next section. Let us first consider the end result of one row of the horizontal reduction process. Fixing a row \(t\), after the reduction we have an equality of the form%
\begin{equation}
\sum_{s\ge 0} a_s x^s y^{-2t}\frac{\diff x }{2y} -  \diff\left(\sum_{s \ge 0} c_sx^sy^{-2t +1} \right)  = \sum_{i = 0}^{2g-1} m_i x^i y^{-2t} \frac{\diff x}{2y}\in W_{-1,t}\label{men-16}
\end{equation}
in which the terms of the exact differential were found in decreasing order as the reductions are performed. Unfortunately, adding each new term as it is obtained is not a \emph{linear} recurrence in the index \(s\), as we have \(s\) appearing in the exponent of \(x\) in each term. Instead we observe that we can express the exact differential as%
\begin{equation}
\diff\left(( c_{0} + x(c_1 + x(\cdots +x(c_r))))y^{-2t + 1} \right)\text{.}\label{men-17}
\end{equation}
In essence, we are applying the \emph{Horner scheme} for polynomial evaluation.%
\par
\hypertarget{p-2948}{}%
Now we specialise to the case of computing the evaluation \(f_i(P)\) of the primitive for some point \(P = (x(P), y(P))\). We can, at each step, compute a further bracketed term starting from the innermost; using the given \(x, y\) values, we get a recurrence whose final term is the same as the original evaluation. So we can compute the terms of a recurrence of the form%
\begin{equation}
f_{i,0}= 0,\,f_{i, n} = x(P)f_{i, n-1}  - \frac{1}{(2t - 1)(2g + 1) -2s} d_{i,n}\label{men-18}
\end{equation}
where \(s = s_{\max} - n\) decreases from its maximum value, and \(d_{i,n}\) is the coefficient of the monomial removed in the \(n\)th step of the reduction process. Multiplying the result of this recurrence by the factor \(y^{-2t+1}\) (which is constant along the row) will result in the evaluation of the primitive for the row. At each step we will no longer have an evaluation of the primitive so far, it is only after completing all the reduction steps that each term will have the correct power of \(x\).%
\par
\hypertarget{p-2949}{}%
We may use the same technique for the vertical reductions; here we have%
\begin{equation*}
\sum_{t\ge 0} \sum_{i = 0}^{2g-1} m_i x^i y^{-2t} \frac{\diff x}{2y} -  \diff\left(\sum_{t \ge 1} \sum_{i = 0}^{2g}d_{ti} S_i(x)y^{-2t +1} \right)  = \sum_{i = 0}^{2g-1} M_i x^i \frac{\diff x}{2y}\in W_{-1,0}\text{,}
\end{equation*}
where now writing \(d_t = \sum_{i=0}^{2g-1} d_{t,i}S_i(x)\), the exact differential can be expressed as%
\begin{equation}
\diff\left(y^{-1}( d_{1} + y^{-2}(d_2 + y^{-2}(\cdots (d_{r-1} +y^{-2}(d_r))\cdots)))\right)\text{.}\label{men-19}
\end{equation}
\begin{remark}[]\label{remark-83}
\hypertarget{p-2950}{}%
The factor \(y^{-2t+1}\) appears in every term in the primitive in row \(t\). It is the same factor in the primitive for the vertical reduction from row \(t\) to row 0. So we can initialise the vertical recurrence from \(W_{-1,t}\) with both the differential and the evaluations obtained from horizontal reduction along row \(t\), and let the vertical reduction steps multiply the evaluation of the row primitives by this factor.%
\end{remark}
\hypertarget{p-2951}{}%
Now we write down the recurrences for both horizontal and vertical reductions precisely using matrices acting on appropriate vector spaces.%
\typeout{************************************************}
\typeout{Subsection 47.4.5 The recurrence}
\typeout{************************************************}
\subsection[{The recurrence}]{The recurrence}\label{subsection-106}
\hypertarget{p-2952}{}%
We will now switch to working with a \(\QQ_p\) vector \(h^t(s)\in W_{s,t}\times \QQ_p^L\) (resp. \(v(t) \in W_{-1,t} \times \QQ_p^L\)); these are of length \(2g+1 + L\) (resp. \(2g + L\)) in the horizontal case (resp.\ vertical case). The first entries represent the current differential we have reduced to, with respect to the basis given in \hyperref[eqn-basis-Wst]{(\ref{eqn-basis-Wst})}. The last \(L\) entries will contain the evaluations of the terms of the primitive picked up so far, one for each of the \(L\) points \(P_1,\ldots, P_L\in X(\QQ_p)\) we want evaluations at.%
\par
\hypertarget{p-2953}{}%
When we horizontally reduce, using the result of \hyperref[lem-harvey-horizontal-reduction]{Lemma~\ref{lem-harvey-horizontal-reduction}}, the two terms we are interested in, the exact differential and the reduction, have a common denominator of \(D_H^t(s) =(2t-1)(2g+1) -2s\). Similarly, in the vertical case, the two terms of interest in \hyperref[lem-harvey-vertical-reduction]{Lemma~\ref{lem-harvey-vertical-reduction}} have a common denominator of \(D_V(t) = 2t -1\).%
\par
\hypertarget{p-2954}{}%
Writing out the result of a single reduction step in terms of these vectors, we see that we need to compute the terms of the recurrence given by \(h^t(s) = R^t_H(s + 1)h^t(s+1)\) in the horizontal case, for \(R^t_H(s)\) defined by%
\begin{equation}
D_H^t(s)R_H^t(s) = M_H^t(s) =
\left(\begin{smallermatrix}{cccc|ccc}
0 \amp \cdots \amp 0\amp  p^t_0\amp       \amp      \amp               \\
D_H^t(s) \amp \cdots \amp 0\amp  p^t_1\amp       \amp      \amp               \\
\vdots\amp\ddots\amp\vdots\amp  \vdots\amp       \amp      \amp               \\
0 \amp \cdots \amp D_H^t(s)\amp  p^t_{2g}\amp       \amp      \amp               \\
\hline 
0  \amp \cdots \amp 0\amp -1\amp x(P_1)D_H^t(s)\amp\cdots\amp 0             \\
\vdots\amp\ddots\amp\vdots\amp  \vdots      \amp \vdots\amp\ddots\amp \vdots        \\
0 \amp\cdots  \amp 0\amp -1\amp  0    \amp\cdots\amp x(P_L)D_H^t(s)        \\
\end{smallermatrix}\right)\text{,}\label{eqn-horiz-red-mat}
\end{equation}
where \(p^t_i\) is the linear function of \(s\) obtained as the coefficient of \(x^i\) in \(2sP(x) - (2t- 1)xP'(x)\). To divide through by \(D_H^t(s)\) we must multiply some terms by \(D_H^t(s)\).%
\par
\hypertarget{p-2955}{}%
For the vertical reductions we use \(R_V(t)\) defined by%
\begin{align}
\amp D_V(t)R_V(t) = M_V(t) =\notag\\
\amp \left(\begin{smallermatrix}{ccc|ccc}
(2t - 1)r_{0,0} + 2s'_{0,0}\amp \cdots \amp (2t - 1)r_{2g-1,0} + 2s'_{2g-1,0}\amp       \amp      \amp               \\
\vdots\amp\ddots\amp\vdots\amp        \amp       \amp                \\
(2t - 1)r_{0,2g-1} + 2s'_{0,2g-1} \amp \cdots \amp (2t - 1)r_{2g-1,2g-1} + 2s'_{2g-1,2g-1}\amp       \amp      \amp               \\
\hline 
-S_0(x(P_1))  \amp \cdots \amp -S_{2g-1}(x(P_1))\amp y(P_1)^{-2}D_V(t)\amp\cdots\amp 0             \\
\vdots\amp\ddots\amp\vdots\amp  \vdots  \amp\ddots\amp \vdots        \\
-S_{0}(x(P_L)) \amp\cdots  \amp -S_{2g-1}(x(P_L))\amp 0    \amp\cdots\amp y(P_L)^{-2}D_V(t)
\end{smallermatrix}\right)\text{,}\label{mrow-264}
\end{align}
where \(r_{i,j}\) is the coefficient of \(x^j\) in \(R_i(x)\) and \(s'_{i,j}\) is the coefficient of \(x^j\) in \(S'_i(x)\). Once again we have multiplied the rightmost block by \(D_V(t)\) to extract the common denominator. We do this to express the reduction steps as linear recurrences with linear polynomial coefficients, rather than rational function coefficients.%
\par
\hypertarget{p-2956}{}%
Introducing the notation \(M^t_H(a,b) = M^t_H(a+1) \cdots M^t_H(b-1)M^t_H(b)\) (and the analogous \(M_V(a,b)\)), we can write the upshot of the above as%
\begin{theorem}[{}]\label{thm-coleman-harvey-reductions}
\hypertarget{p-2957}{}%
    Let \(h^t(s) = (\omega,0) \in W_{s,t}\times \QQ_p^L\), and \(f\colon X(\QQ_p)\to \QQ_p\), write \(c(f)\) for the \terminology{correction factor}, the linear endomorphism of \(W_{s,t}\times \QQ_p^L\) that is the identity on \(W_{s,t}\) and scales each component of \(\QQ_p^L\) by \(f(P_\ell)\) for the corresponding \(P_\ell\). Then the reduced vector \(c(y^{-1})R_V(0,t)c(y^{2})R_H^t(-1,s)h^t(s) \in W_{-1,0} \times \QQ_p^L\) is such that the projection onto \(W_{-1,0}\) is some \(\tilde\omega\) with \(\tilde\omega = \omega - \diff(g)\) for some \(g\in A^\dagger\), and the projection onto \(\QQ_p^L\) is \((g(P_1),\ldots ,g(P_L))\).%
\end{theorem}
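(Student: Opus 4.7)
The plan is to verify by direct inspection that the matrices $R_H^t(s)$ and $R_V(t)$ simultaneously encode (a) one reduction step from Lemma~\ref{lem-harvey-horizontal-reduction} (resp.\ Lemma~\ref{lem-harvey-vertical-reduction}) acting on the differential component and (b) one Horner-scheme update acting on the $L$-tuple of evaluations. The proof then splits into two inductive arguments, one for the horizontal sweep and one for the vertical sweep, glued together by the correction factor $c(y^2)$, whose role is to feed the horizontal primitive evaluation, which is accumulated without its constant-along-the-row factor $y^{-2t+1}$, into the vertical Horner scheme with the right $y$-exponent.

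For the horizontal sweep, I would unpack $M_H^t(s)$ applied to $(c_0, \ldots, c_{2g}, f_1, \ldots, f_L)^\transpose$. The subdiagonal entries $D_H^t(s)$ in the top-left block implement the index shift $x^i x^s \mapsto x^{i+1} x^{s-1}$ on basis elements of lower order, while the last column absorbs the coefficients of $2sP(x) - (2t-1)xP'(x)$ in accordance with Lemma~\ref{lem-harvey-horizontal-reduction}; dividing by $D_H^t(s)$ gives exactly the coefficient vector of the reduction in $W_{s-1,t}$. The bottom block updates each $f_\ell$ to $x(P_\ell) f_\ell - c_{2g}/D_H^t(s)$, one iteration of Horner's scheme for evaluating at $x(P_\ell)$ the polynomial $p_t(x) = \sum_s \alpha_s x^s$ whose coefficients $\alpha_s = -c_{2g,s}/D_H^t(s)$ are the values picked up at horizontal step $s$. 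Induction on the number of steps applied then yields $R_H^t(-1, s)\, h^t(s) = (\tilde\omega_{\text{row}}, (p_t(x(P_\ell)))_\ell)$ with $\omega - \tilde\omega_{\text{row}} = \diff(y^{-2t+1} p_t(x))$, where $y^{-2t+1}$ factors out since it is constant along the row.

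A parallel analysis of $M_V(t)$ using Lemma~\ref{lem-harvey-vertical-reduction} shows that, after dividing by $D_V(t)$, the top block implements the replacement $x^i y^{-2t}(\diff x/2y) \mapsto \bigl((2t-1)R_i(x) + 2S_i'(x)\bigr)/(2t-1) \cdot y^{-2(t-1)}(\diff x/2y)$, and the bottom block sends $\gamma_\ell \mapsto -\sum_i c_i S_i(x(P_\ell))/D_V(t) + y(P_\ell)^{-2} \gamma_\ell$. This is Horner's scheme in the variable $u = y^{-2}$ for the polynomial $\sum_{t'=1}^t \beta_{t'}(x)\, u^{t'-1}$, with $\beta_{t'} = -\sum_i c^{(t')}_i S_i(x)/D_V(t')$ the polynomial picked up at vertical step $t'$; the overall factor $y^{-1}$ reassembles the true primitive $g_{\text{vert}}(x, y) = \sum_{t'=1}^t \beta_{t'}(x) y^{-2t'+1}$. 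Induction then gives that $R_V(0,t)$ applied to an initialization $(c, \gamma^{(t+1)}_\ell)$ returns, in its last $L$ coordinates, $\gamma_1(x(P_\ell)) + y(P_\ell)^{-2t}\, \gamma^{(t+1)}_\ell$.

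Composing the two sweeps on the initial data $(\omega, 0)$: after $R_H^t(-1,s)$ the evaluation slots hold $p_t(x(P_\ell))$; then $c(y^2)$ rescales these to $y(P_\ell)^2 p_t(x(P_\ell))$; then $R_V(0,t)$ produces $\gamma_1(x(P_\ell)) + y(P_\ell)^{-2t+2} p_t(x(P_\ell))$; finally $c(y^{-1})$ multiplies by $y(P_\ell)^{-1}$ to give $y(P_\ell)^{-1}\gamma_1(x(P_\ell)) + y(P_\ell)^{-2t+1} p_t(x(P_\ell)) = g_{\text{vert}}(P_\ell) + g_{\text{horiz},t}(P_\ell) = g(P_\ell)$, while the top-block output is the fully reduced $\tilde\omega \in W_{-1,0}$ satisfying $\omega - \tilde\omega = \diff(g_{\text{horiz},t} + g_{\text{vert}}) = \diff g$. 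The main obstacle is careful bookkeeping of the signs and $y$-exponents throughout: verifying that the insertion of $c(y^2)$ is precisely calibrated so that the $t$-fold accumulation of $y^{-2}$ factors in the vertical recurrence leaves the horizontal contribution sitting at exactly the $y^{-2t+1}$ slot demanded by the actual primitive.
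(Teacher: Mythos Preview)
Your proposal is correct and follows essentially the same approach as the paper. The paper does not give a standalone proof of this theorem; it is stated as the ``upshot'' of the construction in \S\ref{subsec-towards-a-faster} and the explicit matrix definitions preceding it, and your inductive unpacking of the block structure of \(M_H^t(s)\) and \(M_V(t)\) together with the Horner-scheme bookkeeping (including the role of the correction factors \(c(y^2)\) and \(c(y^{-1})\), cf.\ Remark~\ref{remark-83}) is precisely a detailed write-up of that construction.
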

\hypertarget{p-2958}{}%
As the approximation in \hyperref[eqn-approx-frobomegai]{(\ref{eqn-approx-frobomegai})} has summands that occur in several different \(W_{s,t}\)'s, we cannot simply find the product matrix and apply it to a single vector. Instead, we must work through the various subspaces doing as many reductions as possible before we reach a new monomial from the original approximation. As \(D_H\) and \(D_V\) are scalar matrices, we can commute them past the \(M_V\)'s and \(M_H\)'s. This  separates out the components so we can work just with products of matrices of linear polynomials. This reduces the problem to finding several of the products \(M_V(a,b)\) and \(M_H^t(a,b)\). In practice, to use as little \(p\)-adic precision as we can, we must not allow too many runs of multiplications by \(p\) and then divisions by \(p\), so that the relative precision stays as large as possible. This will be addressed in \hyperref[sec-coleman-harvey-precision-correctness]{\(\S\)\ref{sec-coleman-harvey-precision-correctness}}.%
\typeout{************************************************}
\typeout{Section 47.5 Linear recurrence algorithms}
\typeout{************************************************}
\section[{Linear recurrence algorithms}]{Linear recurrence algorithms}\label{sec-coleman-harvey-linear}
\hypertarget{p-2959}{}%
In this section, we recall and adapt some methods for finding subsequent terms of linear recurrences with linear polynomial coefficients. The set-up is that we are given an \(m\times m\) matrix \(M(x)\) with entries that are linear polynomials over a ring \(R\) and wish to obtain several products \(M(x,y) = M(y)M(y-1) \cdots M(x+1)\) for \(x \lt y\) integers. We let \(\mathrm{MM}(m, n)\) be the number of ring operations used when multiplying an \(n\times m\) matrix by an \(m\times m\) matrix, both with entries in \(R\). Then \(\mathrm{MM}(m) = \mathrm{MM}(m,m)\) is the cost of multiplying two \(m \times m\) matrices. We will not say much about these functions here, as modern theoretical bounds for these functions do not affect the point of our main result; however see \cite{LeGall2012} for some recent work on the topic. Using naive matrix multiplication, we have \(\mathrm{MM}(m,n) = O(m^2n)\), which if \(m^2 = o(n)\), cannot be improved upon asymptotically. Whenever \(n \ge m\) we can partition an \(n\times m\) matrix into roughly \(n/m\) blocks each of size \(m\times m\). These blocks can then be multiplied individually for a run time of \(\mathrm{MM}(m, n) = O(\mathrm{MM}(m)\frac{n}{m})\). We will also let \(\mathrm{M}(n)\) be the number of ring operations needed to multiply two polynomials of degree \(n\) with coefficients in \(R\).%
\par
\hypertarget{p-2960}{}%
The method of Bostan-Gaudry-Schost requires that certain elements of \(R\) be invertible. Moreover, they assume as input a product \(\mathrm D(\alpha,\beta,k)\) of several of these inverses. We will apply these methods in \(\ZZ/p^N\ZZ\) where the cost of computing inverses is negligible compared to the rest of the algorithm, so we will take this step for granted; see \cite{Bostan2007} for more details.%
\par
\hypertarget{p-2961}{}%
With the above set-up, Harvey, \cite{Harvey2007} Theorem 6.2, adjusts the algorithm of Bostan-Gaudry-Schost, \cite{Bostan2007} Theorem 15, to prove the following theorem:%
\begin{theorem}[{}]\label{thm-harvey-bgs}
\hypertarget{p-2962}{}%
Let \(M(x)\) be a \(m \times m\) matrix with entries that are linear polynomials in \(R\lb x \rb\), let \(0 \le K_1 \lt L_1 \le  K_2 \lt L_2 \le \cdots \le K_r \lt L_r \le K\) be integers, and let \(s = \lfloor \log_4 K \rfloor \). Suppose that \(2,3,\ldots, 2^s + 1\) are invertible in \(R\). Suppose also that \(r \lt K^{\frac 12 - \epsilon}\), with \(0 \lt \epsilon \lt 1/2\). Then \(M(K_1, L_1), \ldots, M(K_r, L_r)\) can be computed using \(O(\mathrm{MM}(m) \sqrt{K} + m^2 \mathrm{M} (\sqrt{K}))\) ring operations  in  \(R\).%
\end{theorem}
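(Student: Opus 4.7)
The plan is a baby-step/giant-step attack following Bostan--Gaudry--Schost with Harvey's refinement that handles many intervals at once. Set the step size to be $K' := \lceil\sqrt{K}\rceil$ and introduce the auxiliary matrix polynomial
\begin{equation*}
M^*(X) \;=\; M(X+K')\,M(X+K'-1)\cdots M(X+1),
\end{equation*}
whose entries lie in $R[X]$ and have degree at most $K'$. The key observation is that the giant-step product $M(aK', (a+1)K')$ is exactly $M^*(aK')$, so once we can (i) build $M^*$ as a polynomial matrix and (ii) evaluate it at the $K'+1$ points $0, K', 2K', \ldots$, each of the desired $M(K_i, L_i)$ can be assembled from giant-step evaluations, padded by at most $2K'$ direct baby-step matrix multiplications at the two endpoints.

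To build $M^*$ I would use the classical doubling strategy. Starting from the degree-$1$ matrix $M(X+1)$, which represents a one-factor product, I double the number of factors $s = \lfloor\log_2 K'\rfloor$ times: given the polynomial matrix $M^*_j(X)$ representing $2^j$ consecutive factors with entries of degree $2^j$, form $M^*_{2j}(X) = M^*_j(X+2^j)\,M^*_j(X)$. The nontrivial ingredient is the shifted evaluation: computing $M^*_j(X+2^j)$ from the values of $M^*_j$ at $0, 1, \ldots, 2^j$. Bostan--Gaudry--Schost do this via a clever convolution-based shift that costs $O(m^2\,\mathrm M(2^j))$ ring operations and uses exactly the precomputed inverse products $\mathrm D(\alpha,\beta,k)$, which is where the invertibility of $2,\ldots,2^s+1$ is needed. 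The subsequent pointwise matrix product is $2^j + 1$ many $m\times m$ multiplications costing $O(\mathrm{MM}(m)\,2^j)$. Summing the geometric series across the $s$ doubling stages gives a total of $O(\mathrm{MM}(m)\sqrt{K} + m^2\,\mathrm M(\sqrt K))$, matching the stated bound.

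For step (ii), the evaluations of $M^*$ at $0, K', 2K', \ldots$ are obtained from the coefficient representation just produced (or, equivalently, from its value representation) using the same shift-and-evaluate primitive one final time; this again fits inside $O(\mathrm{MM}(m)\sqrt{K} + m^2\mathrm M(\sqrt K))$. To assemble $M(K_i, L_i)$ I multiply together the consecutive giant-step evaluations that lie fully inside $[K_i,L_i]$ and then correct both endpoints by directly multiplying at most $2K'$ baby-step matrices $M(k)$. The interior multiplications, over all $r$ intervals, use at most $\sqrt K$ giant steps in total (they partition a subset of $\{aK'\}$) plus a negligible telescoping overhead, and the endpoint corrections cost $O(r\cdot K'\cdot\mathrm{MM}(m)) = O(K^{1-\epsilon}\,\mathrm{MM}(m))$ thanks to the hypothesis $r < K^{1/2-\epsilon}$, which is absorbed into $O(\mathrm{MM}(m)\sqrt K)$.

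The main obstacle is the shifted-evaluation step, since it is the only place where the polynomial (rather than purely matrix) complexity enters and where the invertibility hypothesis is used non-trivially; once this subroutine is available with the stated $O(m^2\,\mathrm M(\sqrt K))$ cost, the remainder of the argument is a careful bookkeeping exercise, including the verification that the $r$ extra boundary corrections do not asymptotically dominate, which is precisely the role of the condition $r < K^{1/2-\epsilon}$.
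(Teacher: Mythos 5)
The paper does not prove this theorem but cites it from Harvey \cite{Harvey2007} Theorem 6.2, who adapts Bostan--Gaudry--Schost \cite{Bostan2007} Theorem 15; so your argument has to stand on its own. Most of it does: the degree-$\sqrt K$ giant-step matrix polynomial $M^*$, the doubling recursion (one small indexing slip: $M^*_{2j}$ should read $M^*_{j+1}$), the $O(m^2\mathrm{M}(2^j))$ shift primitive that is where the invertibility hypothesis gets used, and the geometric sum to $O(\mathrm{MM}(m)\sqrt K + m^2\mathrm{M}(\sqrt K))$ are exactly the BGS core that Harvey builds on.

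The gap is in your endpoint corrections, which is precisely the new ingredient Harvey contributes over BGS. You allocate each of the $r$ intervals up to $2K'\approx 2\sqrt K$ single baby-step multiplications at the two boundaries and then assert that $O(rK'\,\mathrm{MM}(m)) = O(K^{1-\epsilon}\,\mathrm{MM}(m))$ is ``absorbed into'' $O(\mathrm{MM}(m)\sqrt K)$. That is false: with $0<\epsilon<1/2$ we have $1-\epsilon>1/2$, so $K^{1-\epsilon}/\sqrt K = K^{1/2-\epsilon}\to\infty$; the naive boundary budget strictly dominates the stated bound as soon as $r$ grows with $K$, which the hypothesis $r<K^{1/2-\epsilon}$ expressly allows. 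That hypothesis is not a licence for crude absorption --- it is there to make a genuinely cheaper correction feasible. One version that does work, in the spirit of Harvey's modification: decompose each boundary offset in binary so that each endpoint correction is assembled from $O(\log K)$ interval products of dyadic lengths $2^j$, reusing the level-$j$ matrix polynomials that the doubling recursion already produces (retaining a few extra shifted evaluations per level as needed). The correction cost then drops to $O(r\log K\cdot\mathrm{MM}(m)) = O(K^{1/2-\epsilon}\log K\cdot\mathrm{MM}(m)) = O(\mathrm{MM}(m)\sqrt K)$, and one must separately verify that the extra per-level shifts stay inside $O(m^2\mathrm{M}(\sqrt K))$ --- that bookkeeping is where $r<K^{1/2-\epsilon}$ genuinely earns its keep. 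As written, your argument proves the theorem only in the regime $r=O(1)$.
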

\hypertarget{p-2963}{}%
In order to apply this theorem to the above recurrences for computing the Coleman data, we introduce a variant better suited to the recurrences we obtained in \hyperref[subsec-towards-a-faster]{\(\S\)\ref{subsec-towards-a-faster}}. If we simply applied the same algorithm/result as Harvey naively, we would not get as good a run time in general.%
\begin{theorem}[{}]\label{thm-harvey-bgs-block}
\hypertarget{p-2964}{}%
With the same set-up as \hyperref[thm-harvey-bgs]{Theorem~\ref{thm-harvey-bgs}}, except that now let \(M(x)\) be instead an \((m+n) \times (m+n)\) block lower triangular matrix with 4 blocks, with top left block an \(m \times m\) matrix and bottom right block a diagonal matrix:%
\begin{equation}
\left(\begin{array}{c|c}
A \amp      0               \\
\hline 
B\amp   \begin{smallermatrix}{ccc}d_1 \amp \amp \\  \amp \ddots \amp \\  \amp  \amp  d_n \end{smallermatrix}             \\
\end{array}\right)\text{.}\label{men-21}
\end{equation}
 Then the interval products \(M(K_1, L_1), \ldots, M(K_r, L_r)\) can be computed using only \(O\left((\mathrm{MM}(m) + \mathrm{MM}(m,n)) \sqrt{K} + (m^2 + mn) \mathrm{M} (\sqrt{K})\right)\) ring operations  in  \(R\).%
\end{theorem}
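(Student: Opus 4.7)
The plan is to follow the Bostan--Gaudry--Schost / Harvey algorithm underlying Theorem~\ref{thm-harvey-bgs} step by step, at every stage replacing the cost estimate by one that exploits the block structure of $M(x)$. The central observation is that the class of $(m+n)\times(m+n)$ block lower-triangular matrices whose top-left block is $m\times m$ and whose bottom-right block is an $n\times n$ \emph{diagonal} matrix is closed under multiplication: for any two such matrices,
\[
\begin{pmatrix} A_1 & 0 \\ B_1 & D_1 \end{pmatrix}\begin{pmatrix} A_2 & 0 \\ B_2 & D_2 \end{pmatrix} = \begin{pmatrix} A_1 A_2 & 0 \\ B_1 A_2 + D_1 B_2 & D_1 D_2 \end{pmatrix},
\]
and $D_1 D_2$ is again diagonal. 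Hence every intermediate scalar or matrix-polynomial quantity produced by the algorithm, including the partial products in the divide-and-conquer and the evaluated matrices, has exactly the same block shape as $M(x)$ itself.

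First I would run the baby-step phase: form the matrix polynomial $P(y) = M(y+1)\cdots M(y+h)$ of degree $h = \lceil\sqrt K\rceil$ by divide-and-conquer in $y$, and then evaluate $P$ at $0, h, 2h, \ldots$ by fast multipoint evaluation. Because the block shape is preserved throughout, every matrix-polynomial multiplication splits into three independent pieces. The top-left block is the product of two $m\times m$ matrix polynomials, which by Harvey's analysis contributes $O(\mathrm{MM}(m)\sqrt K + m^2\,\mathrm M(\sqrt K))$. The bottom-left block is assembled from an $(n\times m)$-by-$(m\times m)$ matrix-polynomial product together with a row-scaling of an $n\times m$ matrix polynomial by a diagonal matrix polynomial, together costing $O(\mathrm{MM}(m,n)\sqrt K + mn\,\mathrm M(\sqrt K))$. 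The diagonal block is handled as $n$ independent scalar polynomial multiplications and contributes only $O(n\,\mathrm M(\sqrt K))$, which is absorbed in the rest.

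Next, to assemble the requested interval products $M(K_i, L_i)$ I would combine the precomputed blocks of width $\sqrt K$ with a constant number of leftover scalar multiplications at each boundary, exactly as in Harvey's argument. This involves $O(\sqrt K)$ multiplications of matrices of the given block shape, and by the product formula above each such multiplication costs $\mathrm{MM}(m) + O(\mathrm{MM}(m,n)) + O(n)$ ring operations. Summing yields $O((\mathrm{MM}(m) + \mathrm{MM}(m,n))\sqrt K)$, which merges with the bound from the first phase to give the stated estimate.

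The main obstacle is bookkeeping: one must verify that \emph{no} step of the BGS / Harvey pipeline silently treats $M(x)$ as a dense $(m+n)\times(m+n)$ matrix. In particular the polynomial divide-and-conquer must never form products violating the block shape, and the multipoint evaluation must be applied separately to the $m^2$ entries of the top-left block, the $mn$ entries of the bottom-left block, and the $n$ diagonal entries of the bottom-right block, never producing off-diagonal terms in the latter. Both points follow immediately from the closure property above, and the invertibility hypothesis on $2,3,\dots,2^s+1$ used by Harvey is inherited without change since it only concerns the scalar interpolation step. Assembling the three cost contributions then yields the bound of Theorem~\ref{thm-harvey-bgs-block}.
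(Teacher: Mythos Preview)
Your approach is essentially the same as the paper's: observe that the block lower-triangular form with diagonal bottom-right block is closed under multiplication, store only the $m^2 + mn + n$ nonzero entries, and run the Harvey/BGS algorithm with the matrix operations and the entrywise polynomial manipulations specialised to this shape. The paper's proof is slightly more terse but makes exactly the same two points---that the dominant costs are the matrix multiplications and the entrywise interpolation/shifting, each of which costs $O(\mathrm{MM}(m)+\mathrm{MM}(m,n))$ and $O((m^2+mn)\mathrm M(\sqrt K))$ respectively once restricted to the nonzero entries.

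One small inaccuracy worth flagging: you describe the baby-step phase as ``form the matrix polynomial $P(y)$ by divide-and-conquer in $y$, then evaluate by fast multipoint evaluation.'' That is the older Chudnovsky--Chudnovsky scheme and carries an extra logarithmic factor. The BGS/Harvey algorithm underlying Theorem~\ref{thm-harvey-bgs} never forms $P(y)$ as a polynomial; it maintains only its values on a growing arithmetic progression, doubling the number of evaluation points at each step via the \emph{shift} operation (Lagrange-style interpolation on an arithmetic progression) and pointwise scalar matrix multiplications. The paper accordingly identifies the two dominant subtasks as ``shifting evaluations of the matrices and matrix multiplication,'' not polynomial multiplication and multipoint evaluation. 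Your cited costs are nonetheless the correct BGS costs, so the conclusion stands once the description is corrected.
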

\begin{proof}\hypertarget{proof-309}{}
\hypertarget{p-2965}{}%
The algorithm to do this is the same as the one given for \hyperref[thm-harvey-bgs]{Theorem~\ref{thm-harvey-bgs}} in \cite{Harvey2007} Theorem 6.2, only adjusted to take advantage of the fact that the matrices used are of a more restricted form as follows:%
\par
\hypertarget{p-2966}{}%
First, note that a product of matrices of the assumed form is again of the same shape, so one can work only with matrices of this form throughout. Such matrices should then be stored without keeping track of the entries that are always 0, as a pair of matrices \(A,B\) of size \(m\times m\) and \(n \times m\) respectively, and a list containing the \(n\) bottom right diagonal entries. Now the algorithm of Harvey and Bostan-Gaudry-Schost should be applied using this fixed representation.%
\par
\hypertarget{p-2967}{}%
The complexity of this algorithm is dominated by two main subtasks: shifting evaluations of the matrices and matrix multiplication. During the shifting step, we need only interpolate the non-zero entries; there are \((m+n)m + n\) of these. The number of ring operations required for this is then \(O((m^2 + mn)\mathrm{M}(\sqrt K))\).%
\par
\hypertarget{p-2968}{}%
For the matrix multiplication steps, the restricted form of the matrix once again allows us to use a specialised matrix multiplication routine. Here we can evaluate the block matrix product more efficiently, multiplying only the non-zero blocks, and using the fact that multiplying an \(n \times m\) matrix on the right by a square diagonal matrix stored as a list uses only \(O(nm)\) operations. Therefore the total complexity of multiplying two matrices of this form is \(O(\mathrm{MM}(m,n)+ \mathrm{MM}(m))\). As we do not modify the algorithm in any other way, the result follows.%
\end{proof}
\hypertarget{p-2969}{}%
The conditions on the matrix in \hyperref[thm-harvey-bgs-block]{Theorem~\ref{thm-harvey-bgs-block}} are precisely those satisfied by the matrices \(M_H^t(s)\) and \(M_V(t)\) from \hyperref[sec-coleman-harvey-reduction]{\(\S\)\ref{sec-coleman-harvey-reduction}}. So we may use this algorithm for computing block horizontal and vertical reductions for certain intervals.%
\begin{remark}[]\label{remark-84}
\hypertarget{p-2970}{}%
As well as utilising the polynomial structure of our matrices, for any row with sufficiently many terms compared to the desired precision, it is also possible to interpolate \(p\)-adically. This idea is due to Kedlaya and is explained in \cite{Harvey2007} Section 7.2.1. Using this allows us to compute fewer interval products using \hyperref[thm-harvey-bgs-block]{Theorem~\ref{thm-harvey-bgs-block}} by interpolating the remaining ones.%
\end{remark}
\hypertarget{p-2971}{}%
If we could compute to infinite precision, it would be optimal to reduce as far as possible at each reduction step, i.e.\@, until we get to index of a new term that needs adding. However, in practice, we should divide by \(p\) as soon as possible, in order to reduce the number of extra \(p\)-adic digits needed throughout. Therefore analysing when divisions by \(p\) occur informs which interval products are found.%
\typeout{************************************************}
\typeout{Section 47.6 The algorithm}
\typeout{************************************************}
\section[{The algorithm}]{The algorithm}\label{sec-coleman-harvey-algorithm}
\hypertarget{p-2972}{}%
In this section we describe the complete algorithm derived in the previous sections. The flow of the algorithm is the same as that of Harvey, only we use our larger matrices throughout and have to make some small adjustments to the evaluations. Care should be taken in all steps where division occurs, see \hyperref[sec-coleman-harvey-precision-correctness]{\(\S\)\ref{sec-coleman-harvey-precision-correctness}}.%
\begin{algorithm}[{Computation of Coleman data}]\label{algo-main}
\hypertarget{p-2973}{}%
Input: A list of points \(\{P_\ell\}_{1\le \ell \le L}\) in non-Weierstrass residue disks, precision \(N\).%
\par
\hypertarget{p-2974}{}%
Output: Matrix of Frobenius \(M\), modulo \(p^N\), such that \(\omega_i = \diff f_i +  \sum_j M_{ij} \omega_j\), evaluations \(f_i(P_\ell)\) modulo \(p^N\) for all \(i, \ell\) also.\leavevmode%
\begin{enumerate}
\item\hypertarget{li-605}{}\hypertarget{p-2975}{}%
For each row index \(t = (p(2j + 1) - 1)/2\) for \(0 \le j \le N-1\) do:%
\begin{enumerate}
\item\hypertarget{li-606}{}\hypertarget{p-2976}{}%
Compute the horizontal reduction matrices \(M_H^t((k-1)p, kp-2g-2)\) and \(D^t_H((k-1)p, kp-2g-2)\) for \(0 \le k \le (2g+1)(j+1)-1\) using \hyperref[thm-harvey-bgs-block]{Theorem~\ref{thm-harvey-bgs-block}}, and the \(p\)-adic interpolation outlined in \cite{Harvey2007}  7.2.1, for \(k > N\).%
\item\hypertarget{li-607}{}\hypertarget{p-2977}{}%
For each basis differential \(\omega_i\), \(0 \le i \le 2g-1\) do:%
\begin{enumerate}
\item\hypertarget{li-608}{}\hypertarget{p-2978}{}%
Initialise a vector \(h_{ij} \in {(\ZZ/p^{N+1}\ZZ)}^{2g + 1 + L}\)%
\item\hypertarget{li-609}{}\hypertarget{p-2979}{}%
For each column index \(s = p(i+r+ 1) - 1\) for \(r = (2g+1)j\) down to \(0\) do:%
\begin{enumerate}
\item\hypertarget{li-610}{}\hypertarget{p-2980}{}%
Add the \(x^s{y}^{-2t}\) term of \hyperref[eqn-approx-frobomegai]{(\ref{eqn-approx-frobomegai})} to \(h_{ij}\).%
\item\hypertarget{li-611}{}\hypertarget{p-2981}{}%
Set \(h_{ij} = R_H^t(kp-2g -2, kp)h_{ij}\) by doing \(2g+2\) matrix-vector products.%
\item\hypertarget{li-612}{}\hypertarget{p-2982}{}%
Set \(h_{ij} = R_H^t((k-1)p,kp-2g-2)h_{ij}\).%
\item\hypertarget{li-613}{}\hypertarget{p-2983}{}%
Set \(h_{ij} = R_H^t((k-1)p)h_{ij}\).%
\end{enumerate}
\end{enumerate}
\end{enumerate}
\item\hypertarget{li-614}{}\hypertarget{p-2984}{}%
Initialise a \(2g \times L\) matrix for the evaluations \(E\) and a \(2g\times 2g\) matrix for the action of Frobenius \(M\).%
\item\hypertarget{li-615}{}\hypertarget{p-2985}{}%
    Compute the vertical reduction matrices \(M_V(0, (p-1)/2), M_V((p-1)/2 + jp, (p-1)/2 + (j+1)p)\) for \(1\le j \lt N\) and the corresponding \(D_V(t)\)'s to precision \(p^{N+1}\) using \hyperref[thm-harvey-bgs-block]{Theorem~\ref{thm-harvey-bgs-block}}, and divide through to obtain the corresponding \(R_V\)'s, label them \(R_j\).%
\item\hypertarget{li-616}{}\hypertarget{p-2986}{}%
For each basis differential \(\omega_i\), \(0\le i \le 2g-1\):%
\begin{enumerate}
\item\hypertarget{li-617}{}\hypertarget{p-2987}{}%
Initialise a zero vector \(v_{i} \in (\ZZ/p^N\ZZ)^{2g + L}\).%
\item\hypertarget{li-618}{}\hypertarget{p-2988}{}%
For each row index \(t = (p(2j + 1) - 1)/2\) for \(j = N- 1\) down to \(0\) do:%
\begin{enumerate}
\item\hypertarget{li-619}{}\hypertarget{p-2989}{}%
Add the last \(2g + L\) entries of \(h_{ij}\) to \(v_i\), correcting the last \(L\) entries as in \hyperref[thm-coleman-harvey-reductions]{Theorem~\ref{thm-coleman-harvey-reductions}}.%
\item\hypertarget{li-620}{}\hypertarget{p-2990}{}%
Set \(v_i = R_jv_i\).%
\end{enumerate}
\item\hypertarget{li-621}{}\hypertarget{p-2991}{}%
Set the \(i\)th column of \(M\) to be the first \(2g\) entries of \(v_i\).%
\item\hypertarget{li-622}{}\hypertarget{p-2992}{}%
Set the \(i\)th row of \(E\) to be the last \(L\) entries of \(v_{i}\), correcting them to be evaluations as in \hyperref[thm-coleman-harvey-reductions]{Theorem~\ref{thm-coleman-harvey-reductions}}.%
\end{enumerate}
\item\hypertarget{li-623}{}\hypertarget{p-2993}{}%
Output the matrix of Frobenius \(M\) and the matrix of evaluations \(E\).%
\end{enumerate}
\end{algorithm}
\begin{remark}[]\label{remark-85}
\hypertarget{p-2994}{}%
We have not  used the fact that in \hyperref[algo-coleman]{Algorithm~\ref{algo-coleman}} we only needed to evaluate at Teichmüller points. Using Teichmüller points only serves to make the description of Coleman integration a little simpler, and provides a convenient set of points corresponding to residue disks. This allows one to store the output of \hyperref[algo-main]{Algorithm~\ref{algo-main}} for further computations involving the same set of residue disks.%
\par
\hypertarget{p-2995}{}%
One simpler variant of this algorithm is to compute evaluations for one point at a time, re-running the whole procedure including finding the matrix of Frobenius once for each point. The advantage of this method is not needing a specialised version of the linear recurrence algorithms as in \hyperref[thm-harvey-bgs-block]{Theorem~\ref{thm-harvey-bgs-block}}. While this would result in the same theoretical run time if \(g^2 \in o(p)\), recomputing the matrix of Frobenius would be a duplication of work and inefficient in many parameter ranges.%
\end{remark}
\typeout{************************************************}
\typeout{Section 47.7 Precision}
\typeout{************************************************}
\section[{Precision}]{Precision}\label{sec-coleman-harvey-precision-correctness}
\hypertarget{p-2996}{}%
In this section we examine the level of \(p\)-adic precision that needs to be maintained throughout, in order to compute the matrix of Frobenius and evaluations of primitives to precision \(O(p^N)\). We follow Harvey's approach in \cite{Harvey2007} Section 7 and prove that analogous results hold for our recurrence.%
\begin{lemma}[{}]\label{lemma-66}
\hypertarget{p-2997}{}%
During horizontal reduction, the evaluations of the primitives remain integral. Moreover, if the calculations are performed with initial data known to absolute precision \(p^N\) and intermediate computations are performed with an absolute precision cap of \(p^{N+1}\), then whenever division by \(p\) occurs, the dividend is known to absolute precision \(p^{N+1}\), so that the quotient is known to absolute precision \(O(p^N)\).%
\end{lemma}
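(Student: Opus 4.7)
The plan is to follow Harvey's strategy from \cite{Harvey2007} Lemma 7.1, extending it to account for the $L$ extra coordinates that track the primitive evaluations.

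First I would identify exactly where divisions by $p$ occur. Substituting $t = (p(2j+1)-1)/2$, one computes
\[
D_H^t(s) = (2t-1)(2g+1) - 2s = p(2j+1)(2g+1) - 2(2g+1) - 2s,
\]
so $p \mid D_H^t(s)$ if and only if $s \equiv -(2g+1) \pmod p$, i.e.\ $s = kp - (2g+1)$. At such an $s$, $D_H^t(s)/p = (2j+1)(2g+1) - 2k$, which for the ranges of $j, k$ arising in the algorithm has absolute value bounded by $(2N-1)(2g+1) < p$, so $v_p(D_H^t(s)) = 1$ exactly. Thus there is one bad index per block of width $p$, and $D_H^t(s)$ is a $p$-adic unit elsewhere. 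Inspecting the definition, $M_H^t(s)$ has entries in $\ZZ_p$ throughout ($p^t_i$ is integral since $P \in \ZZ_p[x]$, the $x(P_\ell)$ are integral since each $P_\ell$ is in the affine chart, and $D_H^t(s) \in \ZZ_p$). So away from the bad indices, $R_H^t(s)$ has integral entries and its action preserves integrality and absolute precision.

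The remaining work concentrates at each bad step $s = kp - (2g+1)$, where one must show that $M_H^t(s) v$ is divisible by $p$ in every coordinate. For the first $2g+1$ entries this is Harvey's argument: the exact reduction produces a cohomologous element of $W_{s-1,t}$ with $\ZZ_p$-coefficients, so the single factor of $p$ contributed by $D_H^t(s)$ must be matched by a factor in the numerator. In particular $p \mid v_{2g+1}$ at every bad step. For the last $L$ entries, row $2g+1+\ell$ of $M_H^t(s) v$ equals $-v_{2g+1} + x(P_\ell) D_H^t(s) v_{2g+1+\ell}$; both summands are divisible by $p$ (the first by the preceding claim, the second by $D_H^t(s)$), so division by $D_H^t(s) = pu$ yields a $\ZZ_p$-element. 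This establishes integrality of the evaluations.

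For the precision bound, the initial terms $B_{j,r}$ of \eqref{eqn-approx-frobomegai} lie in $p \ZZ_p$ and are known to absolute precision $p^{N+1}$. With a working cap of $p^{N+1}$, each integral matrix-vector product preserves absolute precision and each unit division is precision-neutral, so the dividend at each bad step is known to absolute precision $p^{N+1}$; exact divisibility by $p$ then yields a quotient known to absolute precision $p^N$. The main obstacle is the divisibility $p \mid v_{2g+1}$ at each bad step, which I would handle by direct appeal to Harvey's Lemma 7.1: the top-left $(2g+1)\times(2g+1)$ block of our extended matrix coincides with Harvey's reduction matrix, so his analysis transfers verbatim; the divisibility in the remaining $L$ rows is then immediate from the explicit form of $M_H^t(s)$.
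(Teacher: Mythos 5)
Your proof is correct and follows essentially the same route as the paper: both defer the crux --- that at each bad index $s$ the $(2g+1)$-component of the vector is divisible by $p$ and known to absolute precision $p^{N+1}$ --- to Harvey's Claim 7.3, and both handle the $L$ new rows by observing that the $-1$ entries only hit that divisible component while the diagonal entries carry an explicit factor of $D_H^t(s)$, with $x(P_\ell)$ integral because the $P_\ell$ lie in non-Weierstrass (hence finite) residue disks. Your explicit identification of the bad indices $s=kp-(2g+1)$ and of $v_p(D_H^t(s))=1$ is detail the paper leaves implicit in the citation, not a different argument.
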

\begin{proof}\hypertarget{proof-310}{}
\hypertarget{p-2998}{}%
As we begin with evaluation 0, we must show that if the evaluations are integral, they remain so after several reduction steps. Any point \(P = (x,y)\) that we are evaluating at is assumed not be in a  Weierstrass residue disk and in particular not in the residue disk at infinity. Hence \(x\) is integral and multiplication by it will never reduce \(p\)-adic valuation.%
\par
\hypertarget{p-2999}{}%
In the horizontal reduction matrix \hyperref[eqn-horiz-red-mat]{(\ref{eqn-horiz-red-mat})}, the only nonzero terms in the bottom left block are the \(-1\)s in the rightmost column which will not disturb integrality.%
\par
\hypertarget{p-3000}{}%
    When \(D_H^t(s)\equiv 0 \pmod p\), it is shown in \cite{Harvey2007} Claim 7.3,  using the assumptions on \(p\) in \hyperref[eqn-assum-p]{(\ref{eqn-assum-p})}, that the vector currently being reduced has its \((2g+1)\)-component divisible by \(p\) and is correct to absolute precision \(p^{N+1}\). Thus this can be divided by \(D_H^t(s)\) while keeping absolute precision \(p^N\). Every column of \(M_H^t(s)\) other than the \((2g+1)\)st has \(D_H^t(s)\) as a factor, so the division can be performed.%
\par
\hypertarget{p-3001}{}%
All other steps follow directly from the work of Harvey.%
\end{proof}
\begin{lemma}[{}]\label{lemma-67}
\hypertarget{p-3002}{}%
During vertical reduction, the evaluations of the primitives remain integral. Moreover, if the calculations are performed with initial data known to absolute precision \(p^N\) and intermediate computations are performed with an absolute precision cap of \(p^{N+1}\), then whenever division by \(p\) occurs, the dividend is known to absolute precision \(p^{N+1}\), so that the quotient is known to absolute precision \(O(p^N)\).%
\end{lemma}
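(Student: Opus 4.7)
The plan is to mirror the structure of the proof of Lemma~\ref{lemma-66}, treating the upper $(2g)\times(2g)$ block of $M_V(t)$ (which is exactly what Harvey analyzed in the vertical case) and the new bottom $L$ rows (coming from evaluation tracking) separately. The upper block contribution is handled verbatim by Harvey, so the only work is to check that the additional rows neither destroy integrality nor spoil the precision at the values of $t$ where $D_V(t)=2t-1$ vanishes modulo $p$.

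First I would verify integrality away from the critical $t$'s. The new entries of $M_V(t)$ are $-S_i(x(P_\ell))$ in the bottom-left block and $y(P_\ell)^{-2}D_V(t)$ on the bottom-right diagonal. Because each $P_\ell$ lies in a non-Weierstrass residue disk, $x(P_\ell)\in\ZZ_p$ (we are in the affine chart) and $y(P_\ell)\in\ZZ_p^\times$. Since $S_i\in\ZZ_p[x]$ this gives $S_i(x(P_\ell))\in\ZZ_p$ and $y(P_\ell)^{-2}\in\ZZ_p^\times$. Whenever $D_V(t)$ is a unit mod $p$, division by $D_V(t)$ is harmless in these new rows, so integral evaluation components in $v$ remain integral after the step $v\mapsto R_V(t)v$.

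The remaining case is $D_V(t)\equiv0\pmod p$, i.e.\ $t\equiv(p+1)/2\pmod p$. Here I would invoke the vertical analog of Harvey's Claim~7.3 (\cite{Harvey2007}, Section~7, which uses the hypothesis~\eqref{eqn-assum-p}): at such $t$ the upper $2g$ entries of the incoming vector lie in $p\ZZ_p$ and are known to absolute precision $p^{N+1}$. Granting this, the bottom $L$ entries of the pre-division vector $M_V(t)v$ split as
\begin{equation*}
-\sum_{i=0}^{2g-1} S_i(x(P_\ell))\,v^{\mathrm{upper}}_i \; + \; y(P_\ell)^{-2}\,D_V(t)\,v^{\mathrm{eval}}_\ell.
\end{equation*}
The first sum is a $\ZZ_p$-linear combination of quantities in $p\ZZ_p$, hence itself lies in $p\ZZ_p$; the second term lies in $p\ZZ_p$ because $D_V(t)\equiv0\pmod p$ and the other factors are integral. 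Thus the entire dividend lies in $p\ZZ_p$ and is known to precision $p^{N+1}$, so that after dividing by $D_V(t)$ the quotient is integral and known to absolute precision $O(p^N)$.

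The main (really, the only) obstacle is the reuse of Harvey's divisibility claim for the upper block in the vertical case; once that is cited, the extension to the new $L$ rows is a direct consequence of (a)~the non-Weierstrass hypothesis giving $x(P_\ell)\in\ZZ_p$ and $y(P_\ell)\in\ZZ_p^\times$, and (b)~the fortunate fact that the bottom-right diagonal of $M_V(t)$ already carries an explicit factor of $D_V(t)$, so the ``eval-to-eval'' contribution is automatically divisible by $p$ at the critical rows.
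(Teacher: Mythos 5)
Your handling of the non-critical steps and of the new bottom rows' entries (integrality of $S_i(x(P_\ell))$, unit-ness of $y(P_\ell)$, and the explicit factor of $D_V(t)$ on the bottom-right diagonal) is correct and matches the paper. But the core of your argument rests on a ``vertical analog of Harvey's Claim~7.3'' asserting that, at a critical row $t\equiv (p+1)/2\pmod p$, the top $2g$ entries of the incoming vector already lie in $p\ZZ_p$. No such statement exists, and it is false in general: Claim~7.3 is specific to the horizontal reduction, where every column of $M_H^t(s)$ except one carries $D_H^t(s)$ as a factor, so only a single component of the vector needs to be divisible by $p$. In the vertical case the top-left block of $M_V(t)$, with entries $(2t-1)r_{i,j}+2s'_{i,j}$, is not divisible by $D_V(t)=2t-1$, and the vector arriving at a critical $t$ is just the image of an integral vector under the preceding (integral) reduction steps plus newly added integral terms from the horizontal phase --- there is no mechanism forcing it into $p\ZZ_p$. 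Harvey's actual vertical argument (\cite{Harvey2007} Lemmas~7.7 and~7.9) is structured entirely differently: one shows that the \emph{interval matrix product} $M_V(t_0,t_0+p)$ over a window containing exactly one critical $t$ is itself divisible by $p$, which absorbs the single valuation-one denominator $D_V(t)$; the divisibility of the matrix product is what Kedlaya's Lemma~2 (\cite{Kedlaya2001}) provides, via the cohomological fact that the reduction of an integral form in $W_{-1,t}$ becomes integral after multiplying by $p$.

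Consequently the genuinely new content of the lemma --- which your proposal does not address --- is to extend that matrix-divisibility argument to the enlarged $M_V(t)$. The paper does this by observing that $M_V(t_0,t_0+p+1)$ is singular mod $p$ (because of the $D_V(t)$ factors on the bottom-right diagonal), so Harvey's inverse-based bookkeeping must be adapted; and then by interpreting the entries of the bottom-left block of $X=D_V(t_0,t_0+p+1)^{-1}M_V(t_0,t_0+p+1)$ as evaluations, at the non-Weierstrass points $P_\ell$, of the primitive $g$ with $X\omega-\omega=\diff g$ (up to powers of $y(P_\ell)$). Kedlaya's argument shows $pg$ is integral, hence these evaluations are integral after multiplication by $p$, and the bottom-right block of $X$ is a power of $\operatorname{diag}(y(P_\ell)^{-2})$, which is already integral. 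That is the step your proof needs and currently lacks; with your premise removed, the claimed divisibility of the dividend in the bottom $L$ entries has no support.
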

\begin{proof}\hypertarget{proof-311}{}
\hypertarget{p-3003}{}%
Any point \(P = (x,y)\) that we are evaluating at is assumed not to be in a  Weierstrass residue disk and in particular not in the residue disk at infinity. Hence \(y\) is a unit and multiplying or dividing by it will not change \(p\)-adic valuation.%
\par
\hypertarget{p-3004}{}%
We check that the analysis in \cite{Harvey2007} Lemmas 7.7 and 7.9 may be adjusted to apply with our extended \(M_V(t)\). Assume that \(t \equiv 1/2 \pmod p\) so that \(D_V(t) \equiv 0 \pmod{p}\), in this case \(v_p(D_V(t))= 1\) as \hyperref[eqn-assum-p]{(\ref{eqn-assum-p})} implies \(D_V(t) \lt p^2\). Unlike in \cite{Harvey2007} Lemma 7.7, our matrix \(M_V(t)\) will not have integral inverse as \(D_V(t)\) appears in the bottom right block, so \(M_V(t)\) is singular mod \(p\). Instead, the inverse of the block lower triangular \(M_V(t)\) has integral top left block, and the bottom two blocks have valuation at least \(-1\). Now letting \(t_0 = (p-1)/2\) and  \(X = D_V(t_0, t_0 + p + 1)^{-1}M_V(t_0, t_0 + p + 1)\), the argument in \cite{Kedlaya2001} Lemma 2 implies that \(pX\) is integral. The argument says that taking \(\omega\in W_{-1,t_0+p+1}\) with integral coefficients, the primitive \(g\) of \(X\omega - \omega\) becomes integral after multiplication by \(p\), and hence the evaluation of \(pg\) at a point in a non-Weierstrass residue disk is integral. The entries in the bottom left block of \(X\) are evaluations of this form up to a power of \(y(P)\), which will not affect integrality. The bottom right block of \(X\) is integral already as it is simply a power of the diagonal matrix \(\operatorname{diag}((y(P_\ell)^{-2})_\ell)\). So each term of the block matrix product \((pX)M_V(t_0 + p + 1)\) is integral, and \(M_V(t_0, t_0 + p) = D_V(t_0, t_0 + p + 1) X M_V(t_0 + p + 1)^{-1}\) is divisible by \(p\).%
\end{proof}
\begin{remark}[]\label{remark-86}
\hypertarget{p-3005}{}%
Multiplying by \((M-I)^{-1}\), as in \hyperref[eqn-coleman-teich-infty-system]{(\ref{eqn-coleman-teich-infty-system})}, will lose \(v_p(\det(M-I))\) digits of absolute \(p\)-adic precision. As \(v_p(\det(M-I)) = v_p(\Jac(X)(\FF_p)[p])\), this is at most \(g\) in the \terminology{anomolous case}, and in general we expect that it is 0, so if \( g = O(N)\) the whole computation can be repeated with the extra precision required  at no extra asymptotic cost.%
\end{remark}
\typeout{************************************************}
\typeout{Section 47.8 Run time analysis}
\typeout{************************************************}
\section[{Run time analysis}]{Run time analysis}\label{sec-coleman-harvey-analysis}
\hypertarget{p-3006}{}%
Having described the algorithm in detail, we now analyse its run time, in order to prove \hyperref[thm-coleman-harvey-main]{Theorem~\ref{thm-coleman-harvey-main}}. First of all we analyse each step of \hyperref[algo-main]{Algorithm~\ref{algo-main}}.%
\par
\hypertarget{p-3007}{}%
The main step is the computation of the reduction matrices via \hyperref[thm-harvey-bgs-block]{Theorem~\ref{thm-harvey-bgs-block}}. In this case, we have \(m = 2g\) (\(+1\) in the horizontal case) and \(n = L\). When reducing horizontally, for each row the largest index is bounded by \(K=O(Np)\). When reducing vertically our index is also at most \(O(Np)\). As there are \(N\) rows in total, we obtain a total of%
\begin{equation}
O\left(N((\mathrm{MM}(g) + \mathrm{MM}(g,L)) \sqrt{Np} + (g^2 + gL) \mathrm{M} (\sqrt{Np}))\right)\label{men-22}
\end{equation}
ring operations to compute the matrices. Using that \(\mathrm M (d) \in \widetilde O (d)\), that \(\mathrm{MM}(m) = m^\omega\) for some \(2\le \omega \le 3\), and the above discussion of \(\mathrm{MM}(m,n)\), we simplify to \(O\left((g^\omega + Lg^{\omega - 1}) \sqrt{p}N^{3/2}\right)\) ring operations, bit complexity \(\widetilde O\left((g^\omega + Lg^{\omega - 1}) \sqrt{p} N^{5/2}\right)\).%
\par
\hypertarget{p-3008}{}%
The remaining operations are exactly as analysed by Harvey in \cite{Harvey2007} Section 7.4. With our larger, but still sparse, horizontal reduction matrices, each reduction step without \hyperref[thm-harvey-bgs-block]{Theorem~\ref{thm-harvey-bgs-block}} uses \(O(g+L)\) rather than \(O(g)\) ring operations, for a total of \(O(N^3g^3(g+L))\) ring operations, or \(\widetilde O(N^4 g^3(g+L) \log p)\) bit operations. We then have a total time complexity of%
\begin{equation}
\widetilde O\left((g^\omega + Lg^{\omega - 1}) \sqrt{Np} N^2 + N^4 g^3(g+L) \log p \right)\text{.}\label{eqn-final-complexity}
\end{equation}
\par
\hypertarget{p-3009}{}%
Now we turn to the algorithm for computing Coleman integrals, obtained by running \hyperref[algo-main]{Algorithm~\ref{algo-main}} once and then \hyperref[algo-coleman]{Algorithm~\ref{algo-coleman}} once for each point. The analysis here is the same as that in \cite{Balakrishnan2010} Section 4.2, where, by using \hyperref[algo-main]{Algorithm~\ref{algo-main}} instead of Kedlaya's algorithm, we may replace the \(\widetilde O(pN^2g^2)\) in their complexity analysis with \hyperref[eqn-final-complexity]{(\ref{eqn-final-complexity})}. The remaining steps to complete the Coleman integration are logarithmic in \(p\) and are dominated by the logarithmic in \(p\) term of \hyperref[eqn-final-complexity]{(\ref{eqn-final-complexity})}.%
\par
\hypertarget{p-3010}{}%
If \(L\) is fixed (for example \(L=2\) when computing integrals between two points) the complexity is as in \cite{Harvey2007} Theorem 1.1. This finishes the proof of \hyperref[thm-coleman-harvey-main]{Theorem~\ref{thm-coleman-harvey-main}}.%
\begin{remark}[]\label{remark-87}
\hypertarget{p-3011}{}%
The version of Kedlaya's algorithm used in \cite{Balakrishnan2010} Algorithm 10, seems to have an advantage in that it outputs the power series of the \(f_i\)'s. This could of course be re-used later to evaluate at further points without re-running Kedlaya's algorithm. However, for \(p\) large enough, this series has so many terms that it is faster asymptotically to recompute everything with the algorithm given here, than it is to evaluate the power series at one point.%
\end{remark}
\typeout{************************************************}
\typeout{Section 47.9 Implementation}
\typeout{************************************************}
\section[{Implementation}]{Implementation}\label{sec-coleman-harvey-implementation}
\hypertarget{p-3012}{}%
We have implemented this algorithm in C\mono{++} as an extension of David Harvey's \mono{hypellfrob} package. This extension has been wrapped and can be easily used from within Sage \cite{Sagemath}. The implementation is included as part of the supplementary materials to this paper. This implementation uses naive matrix multiplication (for which \(\omega= 3\)) and does not take into account the special form of the matrices, as in \hyperref[thm-harvey-bgs-block]{Theorem~\ref{thm-harvey-bgs-block}}; so the run time of this implementation will not have the asymptotic behaviour stated in \hyperref[eqn-final-complexity]{(\ref{eqn-final-complexity})} for the parameter \(L\).%
\par
\hypertarget{p-3013}{}%
In \hyperref[tab-coleman-harvey-gen3-times]{Table~\ref{tab-coleman-harvey-gen3-times}}, we list some timings obtained using this implementation in genus 3, for various primes \(p\) and \(p\)-adic precision bounds \(N\). For comparison, we also list timings for the functionality for computing Coleman integrals in Sage 8.0. The implementation in Sage is written in Python, rather than C\mono{++}, so we would expect some speed-up even if a superior algorithm was not used. Specifically we have compared the time to compute the Coleman data only, and do not include any of the time spent doing the linear algebra and tiny integral steps of Coleman integration, which should be comparatively fast. As such, we only time the components that will differ between the old and new approaches. For the existing Sage code we have timed both finding the matrix of Frobenius and the primitives (by calling \mono{monsky\_washnitzer.matrix\_of\_frobenius\_hyperelliptic}), and the time to evaluate the resulting primitive at one point. This is compared with the time taken by the new implementation, called from its Sage wrapper with one point specified, this outputs the matrix of Frobenius and the evaluations at that point. All timings and examples are on a single 16 AMD Opteron 8384 2.7GHz processor on a  machine with 16 cores and 82 GB RAM\@. While this table is mostly intended to show practicality, in the \(N = 9\) column the square root dependence on \(p\) can be seen. The large jump in the timings between \(p \approx 256\) and \(p \approx 512\) for \(N = 7\) could be explained by the fact that this is the cut off between when an element of \(\ZZ/p^N\ZZ\) is representable in one machine word.%
\begin{table}
\centering
\begin{tabular}{lCllllll}
\(p\backslash N\)&1&3&5&7&9\tabularnewline\hrulethick
131&1.14/0.01&\phantom{0}3.67/0.02&\phantom{0}9.36/0.07&16.90/0.12&\phantom{0}20.06/0.49\tabularnewline[0pt]
257&1.96/0.01&\phantom{0}8.90/0.03&20.83/0.07&30.91/0.18&\phantom{0}63.14/0.68\tabularnewline[0pt]
521&4.73/0.01&19.23/0.03&39.18/0.08&86.49/0.62&162.81/0.91
\end{tabular}
\caption{Timings for genus 3: Sage 8.0 time/New time (sec)\label{tab-coleman-harvey-gen3-times}}
\end{table}
\typeout{************************************************}
\typeout{Section 47.10 Examples}
\typeout{************************************************}
\section[{Examples}]{Examples}\label{sec-coleman-harvey-examples}
\hypertarget{p-3014}{}%
In this section we give an explicit example of a computation we can perform with this technique, demonstrating how large we can feasibly take the parameters. We compare our implementation to the existing functionality for Coleman integration in Sage 8.0 for this example.%
\par
\hypertarget{p-3015}{}%
The current implementation uses the basis \(x^i \diff x/y\), to remain consistent with Harvey's notation. As the existing functionality for Coleman integration in Sage 8.0 uses the basis \(x^i\diff x / 2y\) for cohomology, we must divide the obtained evaluations by 2 to compare them to those returned by Sage or \hyperref[algo-main]{Algorithm~\ref{algo-main}}.%
\begin{example}[]\label{ex-leprevost-large-coleman}
\hypertarget{p-3016}{}%
Let \(C\colon y^2 = x^5 + \frac{33}{16} x^4 +\frac 34 x^3 + \frac 38 x^2 - \frac 14 x + \frac{1}{16}\) be Leprévost's curve, as in \cite{Balakrishnan2010} Example 21. Then letting \(P = (-1,1)\), \(Q = (0,\frac 14)\) and \(p = 2^{45} + 59 = 35184372088891\), using our implementation we can compute the matrix of Frobenius \(M\) to 1 \(p\)-adic digit of precision, and also that%
\begin{align*}
f_0(P) - f_0(Q) \amp= O(p), \amp\amp f_1(P) - f_1(Q) = O(p),\\
f_2(P) - f_2(Q) \amp=  7147166195043 + O(p), \amp\amp f_3(P) - f_3(Q) =  9172338112529 + O(p)\text{.}
\end{align*}
Computing this (and finding \((M-1)^{-1}\)) takes a total of 27.8 minutes (with a peak memory usage of 2.9GB). Evaluating Coleman integrals for such a large prime is far out of the range of what was possible to compute in reasonable amount of time using the previous implementation. In fact, even when \(p = 2^{14} + 27\) the existing Sage functionality takes 53.2 minutes, and uses a larger volume of memory (12GB).%
\par
\hypertarget{p-3017}{}%
As we have used only 1 digit of \(p\)-adic precision, the points \(P\) and \(Q\) are congruent up to this precision to the corresponding Teichmüller point in their residue disk. So, for this example, we do not need to worry about computing tiny integrals; the vector of Coleman integrals \(\int^P_Q \omega_i\) can be obtained from the above vector of evaluations by multiplying by \((M-1)^{-1}\). Doing this gives us the vector \((O(p), O(p), 9099406574713 + O(p), 7153144612900 + O(p))\) reflecting the holomorphicity of the first two basis differentials only. We have also run the same example with precision \(N = 3\); this took 22.5 hours and used a peak of 50GB of memory.%
\end{example}
\typeout{************************************************}
\typeout{Section 47.11 Future directions}
\typeout{************************************************}
\section[{Future directions}]{Future directions}\label{sec-coleman-harvey-future}
\hypertarget{p-3018}{}%
The assumptions on the size of \(p\) allow us to use at most one extra digit of \(p\)-adic precision; it should be possible to relax this assumption somewhat, using a more complicated algorithm instead. Similarly it should be possible to work over extensions of \(\QQ_p\), or remove the assumption that \(Q(x)\) is monic.%
\par
\hypertarget{p-3019}{}%
Kedlaya's algorithm has been generalised to other curves and varieties, e.g.\@ \cite{Harrison2012,Gaudry2001,Goncalves2015,Tuitman2017} and Harvey's techniques have also been generalised to some of these cases \cite{Minzlaff2010,ABCMT2018}. Moreover, explicit Coleman integration has also been carried out in some of these settings, for even degree hyperelliptic curves \cite{Balakrishnan2015}, and for general curves \cite{Balakrishnan2017a}. It would be interesting to adapt our techniques to those contexts. Iterated Coleman integrals are also of interest and have been made computationally effective \cite{Balakrishnan2013}. Extending the algorithm presented here to compute iterated integrals is another natural next step. Harvey has also  described an \emph{average polynomial time} algorithm for dealing with for many primes at once \cite{Harvey2014a}. The author plans to explore the feasibility of analogous techniques when computing Coleman integrals.%


\bibliographystyle{ants}
\bibliography{colemanharvey}

\end{document}